\tikzstyle{pnt}=[draw,ellipse,fill,inner sep=1pt]
\newtheorem{theorem}{Theorem}[section]
\newtheorem{lemma}[theorem]{Lemma}
\newtheorem{corollary}[theorem]{Corollary}
\newtheorem{problem}[theorem]{Problem}
\theoremstyle{definition}
\newtheorem{example}[theorem]{Example}  
\newtheorem{definition}[theorem]{Definition}   
\theoremstyle{remark}
\newtheorem{remark}[theorem]{Remark}
\newcommand{\Q}{\mathcal{Q}}
\newcommand{\QQ}{\overline{\mathcal{Q}}}
\newcommand{\C}{\mathcal{C}} 
\newcommand{\A}{\mathcal{A}} 
\newcommand{\B}{\mathcal{B}} 
\newcommand{\D}{\mathcal{D}}
\renewcommand{\S}{\mathcal{S}}
\newcommand{\la}{\lambda}
\newcommand{\si}{\sigma}
\newcommand{\nI}{\overline{I}}
\DeclareMathOperator\des{des}
\DeclareMathOperator\Des{Des}
\DeclareMathOperator\plat{plat}
\DeclareMathOperator\wdes{wdes}
\DeclareMathOperator\pea{pea}
\DeclareMathOperator\lpea{lpea}
\DeclareMathOperator\hpea{hpea}
\newcommand\LK{\hat{\kappa}}
\newcommand\LKC{\kappa}
\DeclareMathOperator\mat{mat}
\newcommand\id{\iota}
\newcommand{\poly}{C}
\newcommand{\Cat}{\mathrm{Cat}}
\newcommand{\card}[1]{{\lvert #1 \rvert}}
\newcommand{\delete}[1]{}
\newcommand\nn{[n]\sqcup[n]}
\newcommand\nnk{\bigsqcup^k[n]}
\newcommand\hf{\hat{f}}
\newcommand\hfk{\hat{f}_k}
\newcommand\hg{\hat{g}}
\newcommand\hphi{\hat{\phi}}
\newcommand\N{\texttt{n}}
\newcommand\E{\texttt{e}}
\newcommand\ba{\textcolor{red}{|}}
\newcommand\ol{\overline}
\newcommand\s{s}
\newcommand\dy{d}
\newcommand\grid[1]{
 \draw (0,0) -- (#1,0) -- (#1,#1) -- (0,0);
 \foreach \x in {1,...,#1} {
        \draw[thin,dotted] (\x,0) -- (\x,\x) -- (#1,\x);
      }
}
\newcommand\doublegrid[1]{
 \draw[thin,dotted] (0,0) grid (#1,#1);
 \draw (0,0) rectangle (#1,#1);
 \draw (0,0)--(#1,#1); 
}
\newcommand\labelgrid[1]{
	\foreach [evaluate={\x =\sig[\i]}] \i in {1,...,#1} {
    	\draw (\i-.5,0) node[below] {\x};
		\draw (#1,\i-.5) node[right] {\x};
	}
}
\newcommand\labelgridDes[1]{
    \labelgrid{#1}
    \foreach [evaluate={\x =\sig[\i]},evaluate={\xx =\sig[\i-1]}] \i in {1,...,#1} {
    	\ifthenelse{\xx>\x} {\draw[red] (\i-1,-.2)--(\i-1,-.8); 
    	                     \draw[red] (#1+.2,\i-1)--(#1+.8,\i-1);}
	}
}
\newcommand\Desdots[1]{
    \foreach \i in #1 {
        \draw[thick,red] (\i-.3,\i)--(\i+.3,\i);
        \draw[thick,red] (\i,\i-.3)--(\i,\i+.3);
        }
}
\newcommand\decorategrid[1]{
	\foreach [evaluate={\x =\sig[\i]},evaluate={\xx =\sig[\i-1]}] \i in {2,...,#1} {
    	\ifthenelse{\x<\xx}{\draw[red] (\i-1,0) -- (\i-1,\i-1) -- (#1,\i-1);}{};
	}
	\foreach [evaluate={\x =\sig[\i]},evaluate={\ii=\i-1}] \i in {2,...,#1} {
		\foreach [evaluate={\y =\sig[\j]}] \j in {1,...,\ii} {
			\ifthenelse{\x<\y}{\draw[red] (\i-1,\j) -- ++(.3,-.3);}{\draw[red] (\i,\j-1) -- ++(-.3,.3);};
		}
	}
}
\def\e{-- ++(1,0)}
\def\n{-- ++(0,1)}
\newcommand\gle[3]{
\draw (#1,#2) node[scale=.8] {#3};
}
\newcommand\ord[3]{
\draw[darkgray] (#1-.5,#2-.5) node[scale=.8] {{\texttt #3}};
}
\newcommand\arc[2]{ \draw[blue,very thick] (#1)  to [bend left=45] (#2);}
\newcommand\rect{\, \begin{tikzpicture}[scale=.25]
      \draw (0,0) rectangle (2,1);
\end{tikzpicture}\, 
}
\title{Descents on nonnesting multipermutations}
\author{Sergi Elizalde\thanks{Department of Mathematics, Dartmouth College, Hanover, NH 03755. \texttt{sergi.elizalde@dartmouth.edu}}}
\date{}
\begin{document}

\maketitle

\begin{abstract}
    Motivated by recent results on quasi-Stirling permutations, which are permutations of the multiset $\{1,1,2,2,\dots,n,n\}$ that avoid the ``crossing'' patterns $1212$ and $2121$, we consider nonnesting permutations, defined as those that avoid the patterns $1221$ and $2112$ instead. We show that the polynomial giving the distribution of the number of descents on nonnesting permutations is a product of an Eulerian polynomial and a Narayana polynomial. It follows that, rather unexpectedly, this polynomial is palindromic. We provide bijective proofs of these facts by composing various transformations on Dyck paths, including the Lalanne--Kreweras involution.

\end{abstract}

\noindent {\bf Keywords:} nonnesting, quasi-Stirling, canon permutation, Dyck path, Narayana number.

\section{Introduction}

\subsection{Permutations of multisets and descents}

In a seminal 1978 paper~\cite{gessel_stirling_1978}, Gessel and Stanley introduced {\em Stirling permutations}, which are permutations $\pi_1\pi_2\dots\pi_{2n}$ of the multiset $\{1,1,2,2,\dots,n,n\}$ satisfying that, if $i<j<k$ and $\pi_i=\pi_k$, then $\pi_j>\pi_i$. 
A restatement of this requirement is that $\pi$ has no subsequence $\pi_{i}\pi_{j}\pi_{k}$ whose entries are in the same relative order as $212$. More generally, given two sequences $\pi=\pi_1\pi_2\dots\pi_m$ and $\sigma=\sigma_1\sigma_2\dots\sigma_r$ of positive integers, we say that $\pi$ {\em avoids} $\sigma$ if there is no subsequence $\pi_{i_1}\pi_{i_2}\ldots\pi_{i_r}$ (where $i_1<i_2<\dots<i_r$) whose entries are in the same relative order as those of $\sigma$.
We will use the notation $[n]=\{1,2,\dots,n\}$ and $\nn=\{1,1,2,2,\dots,n,n\}$.

The name Stirling permutations comes from the property that their descent polynomials give a combinatorial interpretation of the numerators of certain generating functions for Stirling numbers. We say that $i$ is a {\em descent} of $\pi=\pi_1\pi_2\dots\pi_m$ if $\pi_i>\pi_{i+1}$, 
that it is a {\em plateau} if $\pi_i=\pi_{i+1}$, and
that it is a {\em weak descent} if $\pi_i\ge\pi_{i+1}$.
Denote the {\em descent set} of $\pi$ by 
$$\Des(\pi)=\{i\in[m-1]:\pi_i>\pi_{i+1}\},$$
its number of descents by $\des(\pi)=\card{\Des(\pi)}$, its number of plateaus by 
$$\plat(\pi)=\card{\{i\in[m-1]:\pi_i=\pi_{i+1}\}},$$ and its number
of weak descents by $\wdes(\pi)=\des(\pi)+\plat(\pi)$.
We require $i\in[m-1]$ in these definitions, following~\cite{foata_theorie_1970,graham_concrete_1994,stanley_enumerative_2012,archer_pattern_2019}, but we note that some other papers (e.g. \cite{gessel_stirling_1978,bona_real_2008,janson_generalized_2011,elizalde_descents_2021}) define the last position $m$ to always be a descent of $\pi$, resulting in one more descent than in our definition.

Let $\S_n$ denote set of permutations of $[n]$. The distribution of descents on $\S_n$ is given by the Eulerian polynomials\footnote{
The definition of $A_n(t)$ in~\cite{stanley_enumerative_2012} for $n\ge1$ has an extra factor of $t$.}
\begin{equation}\label{eq:Eulerian_def}
A_n(t)=\sum_{\sigma\in\S_n} t^{\des(\sigma)}.
\end{equation} 
It is well known \cite[Prop.\ 1.4.4 and 1.4.5]{stanley_enumerative_2012} that, for $n\ge1$,
\begin{equation}\label{eq:Eulerian}\sum_{m\ge0} m^n t^m=\frac{tA_n(t)}{(1-t)^{n+1}},\end{equation}
and that 
\begin{equation}\label{eq:EulerianGF}\sum_{n\ge0} A_n(t) \frac{z^n}{n!}=\frac{t-1}{t-e^{(t-1)z}}.\end{equation}
Note also that $A_n(t)$ is palindromic, 
in the sense that the coefficient of $t^r$ equals the coefficient of $t^{n-1-r}$ for all $r$; equivalently,
\begin{equation}\label{eq:Apali}
A_n(t)=t^{n-1}A_n(1/t).
\end{equation}
This is because 
\begin{equation}\label{eq:desRsi}
\des(\sigma^R)=n-1-\des(\sigma),
\end{equation}
where $\sigma^R=\sigma_n\sigma_{n-1}\dots\sigma_1$ denotes the reversal of $\sigma$.

Gessel and Stanley~\cite{gessel_stirling_1978} proved that, in analogy to Equation~\eqref{eq:Eulerian}, the descent polynomial for the set $\Q_n$ of Stirling permutations of length $2n$ satisfies
$$\sum_{m\ge0} S(m+n,m)\, t^m=\frac{t\,\sum_{\pi\in\Q_n} t^{\des(\pi)}}{(1-t)^{2n+1}},$$ 
where $S(k,m)$ denotes the Stirling numbers of the second kind. They also noted that  $|\Q_n|=(2n-1)\cdot(2n-3)\cdot\dots\cdot 3\cdot 1$.
Other interesting properties of the distribution of descents and plateaus on Stirling permutations, as well as generalizations to multisets containing a prescribed number of copies of each element in $[n]$, have been subsequently discovered \cite{brenti_unimodal_1989,bona_real_2008,janson_plane_2008,haglund_stable_2012, park_r-multipermutations_1994,park_inverse_1994,park_p-partitions_1994,janson_generalized_2011}. The term {\em multipermutation} is often used to refer to a permutation of a multiset.

In~\cite{archer_pattern_2019}, Archer et al. introduced a variation of Stirling permutations, which they call {\em quasi-Stirling permutations} and denote by $\QQ_n$. These are permutations $\pi_1\pi_2\dots\pi_{2n}$ of the multiset $\nn$ that avoid  $1212$ and $2121$, meaning that there do not exist $i<j<\ell<m$ such that $\pi_i=\pi_\ell$ and $\pi_j=\pi_m$ (and $\pi_i\neq\pi_j$, although this condition is superfluous since each element appears only twice). By definition, $\Q_n\subseteq\QQ_n$.
Archer et al.~\cite{archer_pattern_2019} showed that 
$$
|\QQ_n|=n!\,\Cat_n=\frac{(2n)!}{(n+1)!},\quad  \text{where }\Cat_n=\frac{1}{n+1}\binom{2n}{n}
$$
is the $n$th Catalan number, and counted such permutations with a given number of plateaus.

The generating function for quasi-Stirling permutations with respect to the number of descents and plateaus was later found in~\cite{elizalde_descents_2021}, expressed as a compositional inverse of the generating function~\eqref{eq:EulerianGF}, from where an analogue of Equation~\eqref{eq:Eulerian} for such permutations is deduced. The extension to multisets containing $k$ copies of each element in $[n]$, for any fixed $k$, is studied in~\cite{elizalde_descents_2021} as well. Further generalizations of these results to multisets with an arbitrary number of copies of each element have recently been obtained by Yan et al.~\cite{yan_statistics_2021,yan_partial_2022} and by Fu and Li~\cite{fu_rooted_2022}.

A permutation $\pi$ of $\nn$ can be viewed as a labeled matching of $[2n]$, by placing an arc with label $k$ between $i$ with $j$ if $\pi_i=\pi_j=k$. Let $\mat(\pi)$ denote the matching of $[2n]$ that is obtained by ignoring the labels on the arcs; see the examples in Figure~\ref{fig:matching}. The condition that $\pi$ avoids $1212$ and $2121$ is equivalent to the fact that $\mat(\pi)$ is a {\em noncrossing matching},
meaning that it contains no pair of arcs $(i,\ell)$ and $(j,m)$ such that $i<j<\ell<m$; see %\cite[Exercise 6.19(o)]{stanley_enumerative_1999} or
\cite[Exer.\ 60]{stanley_catalan_2015}. With this perspective, it is natural to also consider permutations
for which $\mat(\pi)$ is a {\em nonnesting matching}, meaning that it contains no pair of arcs $(i,m)$ and $(j,\ell)$ such that $i<j<\ell<m$; see %\cite[Exer.\ 6.19(e$^5$)]{stanley_enumerative_1999} \sergi{this is in the addendum} or 
\cite[Exer.\ 64]{stanley_catalan_2015}.

\begin{figure}[htb]
\centering
  \begin{tikzpicture}[scale=0.6]
   \foreach [count=\i] \j in {4,4,3,1,1,5,2,2,5,3}
        \node[pnt,label=below:$\j$] at (\i,0)(\i) {};
   \arc{1}{2} \arc{3}{10} \arc{4}{5} \arc{6}{9} \arc{7}{8}
   
  \begin{scope}[shift={(13,0)}]
   \foreach [count=\i] \j in {3,5,3,2,5,2,1,4,1,4}
        \node[pnt,label=below:$\j$] at (\i,0)(\i) {};
   \arc{1}{3} \arc{2}{5} \arc{4}{6} \arc{7}{9} \arc{8}{10}
   \end{scope}
   \end{tikzpicture}
\caption{The matchings corresponding to the permutations $4431152253\in\QQ_5$ (left) and $3532521414\in\C_5$ (right).}
\label{fig:matching}
\end{figure}
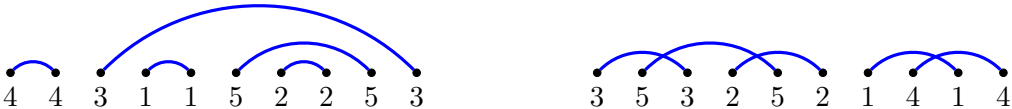

\begin{definition}\label{def:nonnesting}
A permutation $\pi$ of the multiset $\nn$ is called {\em nonnesting} (or a {\em canon permutation}) if it avoids the patterns $1221$ and $2112$; equivalently, if there do not exist $i<j<\ell<m$ such that $\pi_i=\pi_m$ and $\pi_j=\pi_\ell$.
Denote by $\C_n$ the set of nonnesting permutations of $\nn$.
\end{definition}

A permutation $\pi$ of $\nn$ is nonnesting if and only if the subsequence of $\pi$ determined by the first copy of each entry coincides with the subsequence determined by the second copy of each entry. This subsequence, which is a permutation in $\S_n$, will be denoted by $\s(\pi)$.
For example, if $\pi=3532521414\in\C_5$, then $\s(\pi)=35214\in\S_5$.

While the term {\em nonnesting} in Definition~\ref{def:nonnesting} refers to the property of the underlying matching, the alternative term {\em canon permutation} alludes to the musical form where the melody is first played by one voice and then repeated by another voice. This interpretation will play a role in the generalization discussed in Section~\ref{sec:generalization}.

As in the noncrossing case, it is well known \cite[Exer.\ 64]{stanley_catalan_2015} that the number of nonnesting matchings of $[2n]$ is again the $n$th Catalan number; see \cite{chen_crossings_2007} for more information on crossings and nestings in matchings. Since there are $n!$ ways to assign labels to the arcs of a nonnesting matching to form a nonnesting permutation, it follows that
$$
|\C_n|=n!\,\Cat_n=\frac{(2n)!}{(n+1)!}.
$$
In particular, $|\C_n|=|\QQ_n|$. Note however that $\Q_n\not\subseteq\C_n$ in general\footnote{This is stated incorrectly in~\cite[Sec.\ 5]{elizalde_descents_2021}.}; for example, $1221$ is in $\Q_2$ but not in $\C_2$.

Motivated by the results on the distribution of the number of descents and plateaus on Stirling and quasi-Stirling permutations, the goal of this paper is to describe the distribution of these statistics on the set $\C_n$ of nonnesting permutations. We are interested in the polynomial
\begin{equation}\label{eq:polydef}
\poly_n(t,u)=\sum_{\pi\in\C_n} t^{\des(\pi)}u^{\plat(\pi)}.
\end{equation}

Even though the sets $\C_n$ and $\QQ_n$ have the same cardinality, the distribution of descents on them is different. For example, the permutation in $\C_n$ with most descents is $n(n-1)\dots 1n(n-1)\dots 1$, which has $2n-2$ descents, whereas the maximum number of descents of a permutation in $\QQ_n$ is $n-1$, as shown in~\cite{archer_pattern_2019,elizalde_descents_2021}. The distribution of plateaus is also different: $\C_n$ contains permutations with no plateaus, but $\QQ_n$ does not.

\subsection{Dyck paths and Narayana numbers}\label{sec:Narayana}

Let $\D_n$ be the set of lattice paths from $(0,0)$ to $(n,n)$ with steps $\E=(1,0)$ and $\N=(0,1)$ that do not go above the diagonal $y=x$. Elements of $\D_n$ are called Dyck paths. 
There is a standard bijection between nonnesting matchings of $[2n]$ and $\D_n$, obtained by letting the $i$th step of the path be an $\E$ (resp. $\N$) if $i$ is the smaller (resp. larger) endpoint of an arc in the  matching.

A {\em peak} (respectively, a {\em valley}) in a Dyck path is an occurrence of two adjacent steps $\E\N$ (respectively, $\N\E$). A peak is called a {\em low peak} (or a {\em hill}) if these steps touch the diagonal, and it is called a {\em high peak} otherwise\footnote{This terminology comes from the alternative visualization of Dyck paths as paths with steps $(1,1)$ and $(1,-1)$ starting and ending on the $x$-axis and not going below it.}. Denote the number of peaks, the number of low peaks, and the number of high peaks of $D\in\D_n$ by $\pea(D)$, $\lpea(D)$ and $\hpea(D)$, respectively. If $n\ge1$, the number of valleys of $D$ equals $\pea(D)-1$.

Consider the polynomials $$N_n(t,u)=\sum_{D\in\D_n}t^{\hpea(D)}u^{\lpea(D)},$$
and their generating function
$$N(t,u,z)=\sum_{n\ge0}N_n(t,u)z^n.$$
The usual decomposition of Dyck paths by the first return to the diagonal, as $D=\E D_1 \N D_2$, gives the equation
\begin{equation}\label{eq:Ntuz} N(t,u,z)=1+z(N(t,t,z)-1+u)N(t,u,z),\end{equation}
since peaks of $D_1$ become high peaks of $D$, whereas if $D_1$ is empty then $D$ begins with a low peak.
Setting $u=t$ and solving for $N(t,t,z)$, we obtain
$$N(t,t,z)=\sum_{n\ge0}\sum_{D\in\D_n}t^{\pea(D)}z^n=\frac{1+(1-t)z-\sqrt{1-2(1+t)z+(1-t)^2z^2}}{2z}.$$
Now Equation~\eqref{eq:Ntuz} gives 
$$N(t,u,z)=\frac{1}{1-z(N(t,t,z)-1+u)}=\frac{2}{1+(1+t-2u)z+\sqrt{1-2(1+t)z+(1-t)^2z^2}}.$$

It follows from the above expressions that $t(N(t,1,z)-1)=N(t,t,z)-1$. For $n\ge1$, extracting the coefficient of $z^n$ gives the identity 
\begin{equation}\label{eq:Nt1Ntt} tN_n(t,1)=N_n(t,t).\end{equation} 
It is well known that 
\begin{equation}\label{eq:Narayana_numbers} N_n(t,t)=\sum_{r=1}^n \frac{1}{n}\binom{n}{r}\binom{n}{r-1} t^{r}.\end{equation}
The coefficients of this polynomial are called the {\em Narayana numbers}. We sometimes refer to $N_n(t,t)$ and  $N_n(t,u)$ as {\em Narayana polynomials}.

Equation~\eqref{eq:Nt1Ntt} states that the number of paths in $\D_n$ with $r-1$ high peaks equals the number of those with $r$ peaks, for all $r\in[n]$.
A bijective proof of this fact was given by Deutsch~\cite{deutsch_bijection_1998}. Let us describe another simple bijection $\rho:\D_n\to\D_n$ proving this fact. We think of $D\in\D_n$ as a path from the lower-left corner to the upper-right corner of an $n\times n$ grid of unit squares, which we call {\em cells}. 
The steps of each peak $\E\N$ and each valley $\N\E$ of $D$ consist of two sides of a cell; we say that the peak or valley {\em occurs} at that cell.
Any $D\in\D_n$ is determined by the set of cells where its high peaks occur. Let $\rho(D)$ be the unique path in $\D_n$ whose valleys occur precisely at the cells in that set. If $D$ has $r-1$ high peaks, then $\rho(D)$ has $r-1$ valleys, and so it has $r$ peaks. Thus 
\begin{equation}\label{eq:hpea=pea-1}
    \hpea(D)=\pea(\rho(D))-1.
\end{equation} The bijection $\rho$ is essentially the inverse of the {\em rowmotion} map on order ideals of the poset of positive roots in type A; see~\cite{striker_promotion_2012} for details.

An interesting property of the polynomial~\eqref{eq:Nt1Ntt} is that it is palindromic, i.e., 
\begin{equation}\label{eq:Nttpali}
N(t,t)=t^{n+1}N(1/t,1/t).
\end{equation}
This means that the number of paths in $\D_n$ with $r$ peaks equals the number of those with $n+1-r$ peaks, for all $r\in[n]$. While this follows from the expression~\eqref{eq:Narayana_numbers}, next we present a bijective proof, due to Lalanne~\cite{lalanne_involution_1992} and Kreweras~\cite{kreweras_sur_1970}, which will be used later on. 
We use Cartesian coordinates to describe the vertices of the $n\times n$ grid where we draw Dyck paths, so that the lower-left corner is the origin, the lower-right corner is $(n,0)$, and the upper-right corner is $(n,n)$.

Given $D\in\D_n$, suppose that the coordinates of the vertices at the corner of each peak $\E\N$ are $(x_i,y_i)$, where $0<x_1<x_2<\dots<x_r=n$ and $0=y_1<y_2<\dots<y_r<n$.
Let 
$$\{0,1,\dots,n\}\setminus\{x_1,x_2,\dots,x_r\}=\{x'_1,x'_2,\dots,x'_{n+1-r}\}$$
and
$$\{0,1,\dots,n\}\setminus\{y_1,y_2,\dots,y_r\}=\{y'_1,y'_2,\dots,y'_{n+1-r}\},$$
where $0=x'_1<x'_2<\dots<x'_{n+1-r}<n$ and $0<y'_1<y'_2<\dots<y'_{n+1-r}=n$.
Let $\LK(D)$ be the unique path in $\D_n$ such that the coordinates of the corners of its peaks $\E\N$ are $(y'_i,x'_i)$ for $1\le i\le n+1-r$. In particular,
\begin{equation}\label{eq:LKpea}\pea(D)=n+1-\pea(\LK(D)).\end{equation}
The bijection $\LK:\D_n\to\D_n$ is in fact an involution, sometimes referred to as the Lalanne--Kreweras involution.

This bijection has a more convenient visualization if, instead of $\LK(D)$, we draw its reflection along the diagonal $y=x$, which is the path from $(0,0)$ to $(n,n)$ with steps $\N$ and $\E$ such that the coordinates of the corners $\N\E$ are $(x'_i,y'_i)$ for $1\le i\le n+1-r$. See Figure~\ref{fig:LK} for an example.

\begin{figure}
\centering
\begin{tikzpicture}[scale=.6]
\doublegrid{9}   
 \foreach [count=\i] \x in {1,3,4,5,6,9} {
    	\draw[blue,->] (\x,-.5)--(\x,-.2);
    	\draw[blue] (\x,-.5) node[below] {$x_\i$};
 }
 \foreach [count=\i] \y in {0,1,2,3,4,5} {
    	\draw[blue,->] (9.5,\y)--(9.2,\y);
    	\draw[blue] (9.5,\y) node[right] {$y_\i$};
 }
 \draw[very thick,blue](0,0)  \e\n\e\e\n\e\n\e\n\e\n\e\e\e\n\n\n\n;
 \draw[blue] (5.5,3.5) node {$D$};
 \foreach [count=\i] \x in {0,2,7,8} {
    	\draw[magenta,->] (\x,-.5)--(\x,-.2);
    	\draw[magenta] (\x,-.5) node[below] {$x'_\i$};
 }
 \foreach [count=\i] \y in {6,7,8,9} {
    	\draw[magenta,->] (9.5,\y)--(9.2,\y);
    	\draw[magenta] (9.5,\y) node[right] {$y'_\i$};
 }
 \draw[very thick,magenta](0,0)  \n\n\n\n\n\n\e\e\n\e\e\e\e\e\n\e\n\e;
 \draw[magenta] (4.5,7.5) node {$\LK(D)$};
\end{tikzpicture}
\caption{An example of the involution $\LK$, where the path $\LK(D)$ has been reflected along the diagonal for easier visualization.}
\label{fig:LK}
\end{figure}
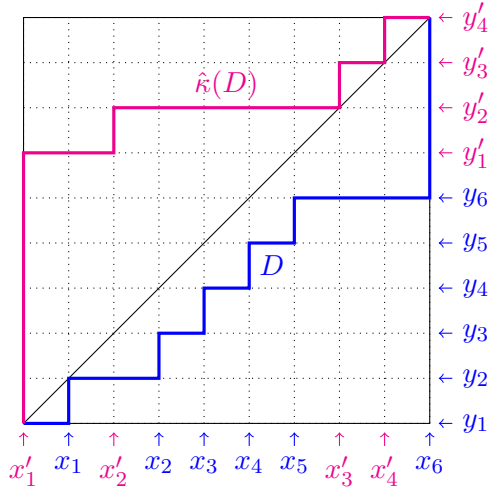

One can check that the composition $\LK\circ\rho$ is also an involution on $\D_n$. In addition to the property $\hpea(D)=n-\pea(\LK(\rho(D)))$ for all $D\in\D_n$, which follows from Equations~\eqref{eq:hpea=pea-1} and~\eqref{eq:LKpea}, it also satisfies that $\lpea(D)=\lpea(\LK(\rho(D)))$, proving the equality 
\begin{equation}\label{eq:Ntupali} 
N_n(t,u)=t^n N_n(1/t,u/t).
\end{equation}
These properties become clear when interpreting the map $D\mapsto\LK(\rho(D))$ in terms of $321$-avoiding permutations, where it is equivalent to the operation sending each permutation to its inverse;
see~\cite{elizalde_fixed_2012,adenbaum_rowmotion_nodate} for more details.

The paper is structured as follows. In Section~\ref{sec:results} we present our main results. In Section~\ref{sec:proofs} we prove the formula giving the distribution of the number of descents and plateaus on nonnesting permutations, by introducing several bijections described in terms of Dyck paths.
In Section~\ref{sec:proofs_symmetry} we use these bijections, along with the Lalanne-Kreweras involution, to prove some symmetry properties of the distributions of the number of descents and weak descents on nonnesting permutations. Finally, Section~\ref{sec:generalization} discusses some generalizations of our results to permutations of the multiset with $k$ copies of each entry, for any fixed $k$.

\section{Main results}\label{sec:results}

Our main result is the following strikingly simple expression for the polynomial $\poly_n(t,u)$, defined in Equation~\eqref{eq:polydef}, as a product of an Eulerian polynomial and a Narayana polynomial.

\begin{theorem}\label{thm:main} 
For $n\ge1$,
$$\poly_n(t,u)=A_n(t)\,N_n(t,u).$$
\end{theorem}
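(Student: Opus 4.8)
The plan is to exploit the factorization $\C_n\cong\S_n\times\D_n$ implicit in the setup. Using the stated fact that for a nonnesting $\pi$ the subsequence of first copies equals the subsequence of second copies (both equal $\si:=\s(\pi)$), the $j$th left endpoint and the $j$th right endpoint of $\mat(\pi)$ both carry the value $\si_j$. Hence, reading $D=\mat(\pi)$ as a Dyck path (step $i$ is $\E$ or $\N$ according as $i$ is the smaller or larger endpoint of its arc), the $\E$-steps carry $\si_1,\dots,\si_n$ in order and so do the $\N$-steps, and $\pi$ is recovered by walking along $D$ and emitting $\si_a$ at the $a$th $\E$ and $\si_b$ at the $b$th $\N$. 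This makes $\pi\mapsto(\si,D)$ a bijection $\C_n\to\S_n\times\D_n$; write $\pi=\pi(\si,D)$.

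The second step reads off the plateau statistic. A plateau $\pi_i=\pi_{i+1}$ means positions $i,i+1$ form a short arc $(i,i+1)$, which happens exactly when the path is on the diagonal just before step $i$, i.e. when the peak $\E\N$ at positions $i,i+1$ is a low peak. Thus $\plat(\pi)=\lpea(D)$, and Theorem~\ref{thm:main} reduces to the descent identity
$$\sum_{D\in\D_n}u^{\lpea(D)}\sum_{\si\in\S_n}t^{\des(\pi(\si,D))}=A_n(t)\,N_n(t,u).$$

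For the descents, classifying each transition of $D$ as $\E\E$, $\N\N$, $\E\N$ (peak) or $\N\E$ (valley) and tracking the emitted entries shows that $\E\E$ and $\N\N$ transitions contribute a descent exactly at a descent of $\si$, low peaks never contribute, while high peaks and valleys contribute a comparison $[\si_a>\si_b]$ with $a>b$. The crucial subtlety is that this does \emph{not} give $\des(\pi)=\hpea(D)+\des(\si)$ pointwise: for $D=\E\E\E\N\N\N$ one finds $\des(\pi)=2\des(\si)+[\si_n>\si_1]$, so the naive identification fails and the per-path generating function $\sum_\si t^{\des(\pi(\si,D))}$ is not $t^{\hpea(D)}A_n(t)$. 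I would therefore construct a bijection on $\S_n\times\D_n$ that fixes $\lpea$ (hence the plateau count) and recasts the descent number in the additive form $\hpea(D')+\des(\si')$, so that the inner sum factors and yields $A_n(t)\,N_n(t,u)$. The natural tools are the Dyck-path maps already introduced: the rowmotion $\rho$, which turns high peaks into valleys by~\eqref{eq:hpea=pea-1}, and the Lalanne--Kreweras involution $\LK$ (note $\LK\circ\rho$ preserves $\lpea$), together with a first-return surgery mirroring the functional equation~\eqref{eq:Ntuz} for $N(t,u,z)$.

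The main obstacle is precisely this decoupling of the descent statistic. Because the factorization genuinely mixes paths with the same hill count, the bijection cannot fix $D$; it must trade a high peak of the path for a descent of the permutation while leaving the hills untouched. I expect the cleanest route to be an induction on $n$ via the first-return (or initial-$\E$-run) decomposition that simultaneously tracks where the largest label is inserted into $\si$ and how the first arc interacts with the remainder of $\pi$, with the base case $n=1$ trivial and consistency checked against the small cases $n=2,3$ computed directly.
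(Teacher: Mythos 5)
Your setup reproduces the paper's Section~\ref{sec:stat} exactly: the bijection $\pi\mapsto(\s(\pi),\dy(\pi))$ between $\C_n$ and $\S_n\times\D_n$, the identity $\plat(\pi)=\lpea(\dy(\pi))$, and the classification of descents of $\pi(\si,D)$ by transition type ($\E\E$, $\N\N$, $\E\N$, $\N\E$) are all correct, and you correctly isolate the real difficulty: $\des(\pi(\si,D))$ is not $\hpea(D)+\des(\si)$ pointwise, so the double sum does not factor termwise. But at precisely that point the proposal stops. The bijection on $\S_n\times\D_n$ that you say you ``would therefore construct'' --- one fixing $\lpea$ and recasting the descent count in the additive form $\hpea(D')+\des(\si')$ --- is the entire content of the proof, and you give no construction of it; stating the properties it should have is a restatement of the goal, not an argument. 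This is a genuine gap.

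The tools you propose are also not the ones that close it. The maps $\rho$ and $\LK$ are used in the paper only for the symmetry results of Section~\ref{sec:proofs_symmetry}, not for Theorem~\ref{thm:main}; and a first-return induction mirroring~\eqref{eq:Ntuz} is doubtful, because the Eulerian factor $A_n(t)$ does not obey an algebraic first-return-type recursion, and the descent contributions at the return point depend on how $\si$ distributes its values between the two path factors --- which is exactly the interaction one cannot control by that decomposition. What the paper actually does is prove the refinement $\poly_n^\si(t,u)=t^{\des(\si)}N_n(t,u)$ (Theorem~\ref{thm:refined}) via two explicit Dyck-path surgeries: Lemma~\ref{lem:nonadjacent} flips single cells of the grid to show that $\poly_n^\si$ depends only on $\Des(\si)$, reducing to a reverse-layered $\si$, and Lemma~\ref{lem:remove} moves a maximal run of $\E$ steps within a rectangle to delete the largest element of $\Des(\si)$ at the cost of exactly one factor of $t$; iterating reaches the identity permutation, where $\des(\pi)=\hpea(D)$ gives $N_n(t,u)$. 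Without these constructions (or a worked-out substitute), your proposal is an accurate analysis of the problem but not a proof of the theorem.
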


\begin{example}\label{ex:n4} Here are the polynomials $\poly_n(t,u)$ for $n\le 4$, along with the factorizations given by Theorem~\ref{thm:main}:
\begin{align*}
\poly_1(t,u)&=u,\\
\poly_2(t,u)&={u}^{2}+ ( 1+{u}^{2}) t+{t}^{2}=\left(1+t\right)\left({u}^{2}+t\right),\\
\poly_3(t,u)&={u}^{3}+ ( 1+2u+4{u}^{3} ) t+ ( 5+8u+{u}^{3}
 ) {t}^{2}+ ( 5+2u ) {t}^{3}+{t}^{4}\\
 &=\left(1+4t+t^2\right) \left({u}^{3}+ ( 1+2u ) t+{t}^{2}
\right),\\
\poly_4(t,u)&={u}^{4}+ ( 1+2u+3{u}^{2}+11{u}^{4} ) t+ ( 15+24u+33{u}^{2}+11{u}^{4} ) {t}^{2}+ ( 56+44u+33{u}^{2}+{u}^{4} ) {t}^{3}\\ & \quad + ( 56+24u+3{u}^{2} ) {t}^{4}
+ ( 15+2u ) {t}^{5}+{t}^{6}\\
&=\left(1+11t+11t^2+t^3\right) \left({u}^{4}+ ( 1+2u+3{u}^{2} ) t+ ( 4+2u ) {t}^{2}+{t}^{3}\right).
\end{align*}
\end{example}

Theorem~\ref{thm:main} will be proved in Section~\ref{sec:proofs}.
As a consequence of this factorization, we obtain the following two statements about the symmetry of the distribution of descents and weak descents on nonnesting permutations.

\begin{corollary}\label{cor:symmetric_wdes}
The distribution of the number of weak descents on $\C_n$ is symmetric, in the sense that 
$$|\{\pi\in\C_n:\wdes(\pi)=r\}|=|\{\pi\in\C_n:\wdes(\pi)=2n-r\}|$$
for all $r$.
\end{corollary}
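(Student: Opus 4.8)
The plan is to derive the symmetry directly from the factorization in Theorem~\ref{thm:main}, by passing from the joint distribution of $(\des,\plat)$ to the distribution of their sum $\wdes=\des+\plat$. Writing $W_n(s)=\sum_{\pi\in\C_n}s^{\wdes(\pi)}$ for the weak descent polynomial, the claimed symmetry $|\{\pi:\wdes(\pi)=r\}|=|\{\pi:\wdes(\pi)=2n-r\}|$ for all $r$ is exactly the palindromicity statement $W_n(s)=s^{2n}W_n(1/s)$, so it suffices to establish this last identity.

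First I would observe that, since $\wdes(\pi)=\des(\pi)+\plat(\pi)$, the diagonal specialization $t=u=s$ in the definition~\eqref{eq:polydef} collapses the two statistics into their sum:
$$W_n(s)=\poly_n(s,s)=\sum_{\pi\in\C_n}s^{\des(\pi)+\plat(\pi)}=\sum_{\pi\in\C_n}s^{\wdes(\pi)}.$$
By Theorem~\ref{thm:main} this factors as $W_n(s)=A_n(s)\,N_n(s,s)$, a product of an Eulerian polynomial and a Narayana polynomial.

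Next I would invoke the two palindromicity facts already recorded in the excerpt. The Eulerian factor satisfies $A_n(s)=s^{n-1}A_n(1/s)$ by~\eqref{eq:Apali}, and the Narayana factor satisfies $N_n(s,s)=s^{n+1}N_n(1/s,1/s)$ by~\eqref{eq:Nttpali}. Since a product of palindromic polynomials is again palindromic, with the centering exponents adding, multiplying these two relations yields
$$W_n(s)=A_n(s)\,N_n(s,s)=s^{n-1}A_n(1/s)\cdot s^{n+1}N_n(1/s,1/s)=s^{2n}W_n(1/s),$$
which is precisely the asserted symmetry of the coefficients of $W_n$ about $r=n$.

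I do not expect a genuine obstacle here: the corollary is an immediate consequence of the factorization together with the known palindromicity of each factor, the only conceptual point being the recognition that the weak descent generating function is the diagonal specialization $\poly_n(s,s)$. If instead a bijective proof were desired, the natural route would be to realize the symmetry $r\mapsto 2n-r$ as an involution on $\C_n$ respecting the product structure, combining the reversal-type symmetry behind~\eqref{eq:desRsi} on the Eulerian factor with the Lalanne--Kreweras involution $\LK$ (together with the map $\rho$) on the Narayana factor; such a construction, however, is not needed for the present statement.
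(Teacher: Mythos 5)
Your argument is correct and is essentially identical to the paper's own proof: both set $t=u$ in the factorization of Theorem~\ref{thm:main} and multiply the palindromicity relations~\eqref{eq:Apali} and~\eqref{eq:Nttpali} to obtain $\poly_n(t,t)=t^{2n}\poly_n(1/t,1/t)$. No issues.
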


\begin{proof}
This follows from Theorem~\ref{thm:main} and the fact that both $A_n(t)$ and $N_n(t,t)$ are palindromic. Indeed, Equations~\eqref{eq:Apali} and~\eqref{eq:Nttpali} imply that $$\poly_n(t,t)=A_n(t)N_n(t,t)=t^{2n}A(1/t)N_n(1/t,1/t)=t^{2n}\poly_n(1/t,1/t).\qedhere$$
\end{proof}

\begin{corollary}\label{cor:symmetric_des}
The distribution of the number of descents on $\C_n$ is symmetric, in the sense that 
$$|\{\pi\in\C_n:\des(\pi)=r\}|=|\{\pi\in\C_n:\des(\pi)=2n-2-r\}|$$
for all $r$.
\end{corollary}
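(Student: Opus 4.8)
The plan is to deduce Corollary~\ref{cor:symmetric_des} from Theorem~\ref{thm:main} in exactly the same spirit as the proof of Corollary~\ref{cor:symmetric_wdes}, but now tracking the variable $t$ alone while treating the plateau variable $u$ as a fixed parameter. The statement about descents corresponds to setting $u=1$ in $\poly_n(t,u)$, since $\poly_n(t,1)=\sum_{\pi\in\C_n}t^{\des(\pi)}$. So the entire claim reduces to showing that the single-variable polynomial $\poly_n(t,1)$ is palindromic, with the precise centering dictated by its degree: the maximum number of descents of a permutation in $\C_n$ is $2n-2$ (as noted in the introduction, attained by $n(n-1)\cdots1\,n(n-1)\cdots1$), so $\poly_n(t,1)$ has degree $2n-2$, and palindromicity of a degree-$(2n-2)$ polynomial is exactly the symmetry $r\leftrightarrow 2n-2-r$.

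First I would invoke Theorem~\ref{thm:main} at $u=1$ to write $\poly_n(t,1)=A_n(t)\,N_n(t,1)$. Then I would establish the palindromicity of each factor separately. The Eulerian factor is handled by Equation~\eqref{eq:Apali}, which gives $A_n(t)=t^{n-1}A_n(1/t)$. For the Narayana factor, I would use Equation~\eqref{eq:Nt1Ntt}, namely $tN_n(t,1)=N_n(t,t)$, together with the palindromicity~\eqref{eq:Nttpali} of $N_n(t,t)$, to derive the corresponding symmetry for $N_n(t,1)$. Concretely, from $N_n(t,t)=t^{n+1}N_n(1/t,1/t)$ and $N_n(t,t)=tN_n(t,1)$ one obtains
\begin{equation*}
tN_n(t,1)=t^{n+1}\cdot\frac{1}{t}N_n(1/t,1)=t^{n}N_n(1/t,1),
\end{equation*}
so that $N_n(t,1)=t^{n-1}N_n(1/t,1)$; thus $N_n(t,1)$ is a palindromic polynomial of degree $n-1$.

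Finally I would multiply the two relations. From $A_n(t)=t^{n-1}A_n(1/t)$ and $N_n(t,1)=t^{n-1}N_n(1/t,1)$ we get
\begin{equation*}
\poly_n(t,1)=A_n(t)N_n(t,1)=t^{n-1}A_n(1/t)\cdot t^{n-1}N_n(1/t,1)=t^{2n-2}\,\poly_n(1/t,1),
\end{equation*}
which is precisely the palindromic symmetry $\poly_n(t,1)=t^{2n-2}\poly_n(1/t,1)$. Reading off coefficients, the coefficient of $t^r$ equals that of $t^{2n-2-r}$, which is the asserted equality of cardinalities. I do not anticipate a genuine obstacle here: every ingredient—the factorization, the two palindromicity facts, and the bridge identity~\eqref{eq:Nt1Ntt}—is already available in the excerpt, so the only care needed is bookkeeping the exponents so that the two centerings $n-1$ add correctly to $2n-2$ rather than, say, miscounting by the shift hidden in~\eqref{eq:Nt1Ntt}. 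The one point worth stating explicitly for the reader is why degree considerations make "palindromic" equivalent to the stated index reflection, i.e. confirming that $\deg_t\poly_n(t,1)=2n-2$.
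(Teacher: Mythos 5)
Your proposal is correct and follows essentially the same route as the paper: specialize Theorem~\ref{thm:main} at $u=1$, use Equation~\eqref{eq:Apali} for the Eulerian factor, derive $N_n(t,1)=t^{n-1}N_n(1/t,1)$ from Equations~\eqref{eq:Nt1Ntt} and~\eqref{eq:Nttpali}, and multiply to get $\poly_n(t,1)=t^{2n-2}\poly_n(1/t,1)$. The only difference is that you carry out the short derivation of the Narayana palindromicity explicitly, which the paper leaves to the reader.
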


\begin{proof}
In addition to Equation~\eqref{eq:Apali}, we now use the fact that $N_n(t,1)=t^{n-1}N_n(1/t,1)$, which follows from Equations~\eqref{eq:Nt1Ntt} and~\eqref{eq:Nttpali}. We obtain $$\poly_n(t,1)=A_n(t)N_n(t,1)=t^{2n-2}A(1/t)N_n(1/t,1)=t^{2n-2}\poly_n(1/t,1).\qedhere$$
\end{proof}

\begin{example} Setting $u=1$ in Example~\ref{ex:n4}, we obtain the following palindromic expressions:
\begin{align*}
\poly_1(t,1)&=1,\\
\poly_2(t,1)&=1+2t+t^2=(1+t)(1+t),\\
\poly_3(t,1)&=1+7t+14t^2+7t^3+t^4=(1+4t+t^2)(1+3t+t^2),\\
\poly_4(t,1)&=1+17t+83t^2+134t^3+83t^4+17t^5+t^6=(1+11t+11t^2+t^3)(1+6t+6t^2+t^3).
%\poly_5(t,1)&=1+36t+346t^2+1216t^3+1842t^4+1216t^5+346t^6+36t^7+t^8\\
%&=(1+26t+66t^2+26t^3+t^4)(1+10t+20t^2+10t^3+t^4).
\end{align*}
\end{example}

It is interesting to note that, in contrast to the symmetry of the Eulerian polynomials, there seems to be no obvious explanation for the symmetries described by Corollaries~\ref{cor:symmetric_wdes} and~\ref{cor:symmetric_des}. We remark that the analogous statements for quasi-Stirling permutations do not hold~\cite{elizalde_descents_2021}.
Bijective proofs of Corollaries~\ref{cor:symmetric_wdes} and~\ref{cor:symmetric_des} will be provided in Section~\ref{sec:proofs_symmetry}.

The bivariate polynomials $\poly_n(t,u)$ also satisfy an identity analogous to Equation~\eqref{eq:Ntupali}, namely 
$$\poly_n(t,u)=t^{2n-1} \poly_n(1/t,u/t),$$
but the proof is straightforward in this case. Indeed, the reversal operation $\pi\mapsto\pi^R$ on $\C_n$ satisfies that 
\begin{equation}\label{eq:desRwdes}
 \des(\pi^R)=2n-1-\wdes(\pi)   
\end{equation}
and $\plat(\pi^R)=\plat(\pi)$.

To establish Theorem~\ref{thm:main}, we will prove a slightly stronger statement. For $\sigma\in\S_n$, define 
$$\C_n^\sigma=\{\pi\in\C_n:\s(\pi)=\sigma\},$$
which can be used to partition the set of nonnesting permutations as 
$\C_n=\bigsqcup_{\sigma\in\S_n} \C_n^\sigma$.
Letting 
\begin{equation}\label{eq:polysidef}
\poly_n^\sigma(t,u)=\sum_{\pi\in\C_n^\sigma}t^{\des(\pi)}u^{\plat(\pi)},
\end{equation}
we have the following refinement, which will be proved in Section~\ref{sec:proofs}.

\begin{theorem}\label{thm:refined}
For all $\sigma\in\S_n$, 
$$\poly_n^\sigma(t,u)=t^{\des(\sigma)}N_n(t,u).$$
\end{theorem}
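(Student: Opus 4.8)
The plan is to pass through the matching bijection and reduce the statement to one about how \emph{relabeling} affects descents of Dyck-path words. First I would record the bijection $\C_n^\sigma\cong\D_n$: since $\pi\in\C_n$ is determined by its matching together with the labels on the arcs, and $\s(\pi)=\sigma$ forces the $i$th arc (in order of left endpoints) to carry the label $\sigma_i$, the map $\pi\mapsto\mat(\pi)=D$ is a bijection from $\C_n^\sigma$ onto $\D_n$. Let $w\in\C_n^\iota$ be the word of arc indices read left to right (the element on the same matching with identity skeleton), and for a permutation $\tau$ write $\tau\circ w$ for the word whose $j$th letter is $\tau_{w_j}$. Then the permutation with matching $D$ and skeleton $\sigma$ is exactly $\sigma\circ w$, and since plateaus depend only on which positions share an arc, not on the labels, we get $\plat(\sigma\circ w)=\plat(w)$ for every $\sigma$.

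Next I would classify the four kinds of adjacent position pairs according to the two corresponding steps of $D$ — $\E\E$, $\N\N$, $\E\N$ (a peak) and $\N\E$ (a valley) — and extract two clean facts. A plateau occurs exactly when positions $j,j+1$ form a single arc $(j,j+1)$; tracking the opener/closer indices shows that such arcs are \emph{precisely} the low peaks of $D$, so $\plat(\pi)=\lpea(D)$ for all $\sigma$. For $\sigma=\iota$ one checks that the $\E\E$, $\N\N$ and $\N\E$ pairs are always ascending while each high peak is descending, giving $\des(w)=\hpea(D)$ and hence $\poly_n^\iota(t,u)=N_n(t,u)$. Because relabeling fixes the matching (hence $\lpea=\plat$), Theorem~\ref{thm:refined} becomes equivalent to the single relabeling identity $\poly_n^\sigma(t,u)=t^{\des\sigma}\,\poly_n^\iota(t,u)$; unpacked, over $D\in\D_n$ the statistic $\des(\sigma\circ w)-\des(\sigma)$ should be jointly equidistributed with $\hpea(D)$ relative to $\lpea(D)$.

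The meaning of this identity is that $\des(\sigma\circ w)=\#\{j:\sigma_{w_j}>\sigma_{w_{j+1}}\}$ counts, among the non-plateau adjacencies of $w$, those whose ordered pair of distinct values is inverted by $\sigma$: the $\E\E$ and $\N\N$ pairs are the consecutive pairs $(p,p{+}1)$, inverted exactly when $p\in\Des(\sigma)$, whereas peaks and valleys contribute more global comparisons. I would prove the equidistribution by inducting along a chain of adjacent transpositions from $\iota$ to $\sigma$: for $\sigma'=s_r\sigma$ I aim to build a plateau-preserving bijection of $\D_n$ witnessing $\poly_n^{\sigma'}=t^{\,\des\sigma'-\des\sigma}\,\poly_n^{\sigma}$, so that the factors $t^{\des\sigma}$ telescope to the claimed power.

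The main obstacle is exactly this per-step bijection. Swapping two adjacent labels alters $\pi$ only in the four positions of the two affected arcs, yet a single swap can create or destroy descents at peaks and valleys that are far apart, so $\des(\sigma\circ w)-\des(w)$ is \emph{not} constant in $w$: already for $n=3$, $\sigma=132$ the naive per-path identity $\des(\pi)=\des(\sigma)+\hpea(D)$ fails, and the correspondence with $\D_n$ must permute paths within each $\lpea$-class. Making the bijection explicit — presumably by composing the matching map with the rowmotion map $\rho$ and the Lalanne--Kreweras involution $\LK$, so as to convert the redistributed descents into high peaks while fixing the low peaks — is where the real work lies. In parallel I would try the first-return decomposition $D=\E D_1\N D_2$ behind~\eqref{eq:Ntuz}: the $\N\E$ junction contributes a factor $t^{[\,k\in\Des(\sigma)]}$ matching the split $\des\sigma=\des\sigma'+\des\sigma''+[\,k\in\Des(\sigma)]$, which reduces Theorem~\ref{thm:refined} to its analogue for indecomposable (prime) paths; but that prime case is equidistributional in the same essential way, so it organizes the induction without removing the need to construct the bijection.
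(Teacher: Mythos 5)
Your setup is correct and matches the paper's: the identification of $\C_n^\sigma$ with $\D_n$ via $\pi\mapsto(\s(\pi),\dy(\pi))$, the observations $\plat(\pi)=\lpea(D)$ for all $\sigma$ and $\des(\pi)=\hpea(D)$ for $\sigma=\id$ (hence $\poly_n^{\id}=N_n$), and the correct diagnosis that $\des(\pi)-\des(\sigma)$ is \emph{not} pointwise equal to $\hpea(D)$, so a genuine path bijection is needed. But the proof stops exactly where the theorem starts: you never construct the per-step bijection, and you say so yourself (``is where the real work lies''). As written this is a reduction of the theorem to an unproved equidistribution lemma, not a proof. Moreover, the tools you gesture at are unlikely to supply it: a chain of \emph{adjacent} transpositions $s_r$ changes $\Des(\sigma)$ at each step in a way that redistributes descents nonlocally over the path, and neither rowmotion, the Lalanne--Kreweras involution (which the paper uses only for the symmetry corollaries, not here), nor the first-return decomposition resolves this; your own prime-path reduction admits it ``organizes the induction without removing the need to construct the bijection.''

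The paper's actual route avoids the adjacent-transposition chain entirely, and the two constructions it substitutes are the entire content of the proof. First, it only ever transposes consecutive \emph{values} $k,k+1$ sitting in \emph{non-adjacent positions}; this preserves $\Des(\sigma)$ and corresponds on the grid to flipping the single cell whose row and column are labeled $\{k,k+1\}$ (switching $\E\N\leftrightarrow\N\E$ there), which visibly preserves $(\des,\plat)$. Iterating these moves reduces any $\sigma$ to the reverse-layered representative $\la^S$ with $S=\Des(\sigma)$. Second --- and this is the step with no analogue in your proposal --- for reverse-layered $\sigma$ it removes the largest descent $m=\max S$ by a bijection $\hg$ that takes the portion $V$ of the path inside the rectangle $R=\{(x,y):m\le x\le n,\ 0\le y\le\ell\}$ and cyclically moves the maximal initial (or final) run of $\E$ steps of $V$ to the other end; a careful four-factor case analysis shows this drops $\des$ by exactly $1$ while fixing $\lpea$. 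Repeating $|S|$ times telescopes to $t^{\des(\sigma)}$. Without an explicit construction playing the role of $\hg$, your argument has a genuine gap at its central step.
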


Note that Theorem~\ref{thm:main} follows immediately from Theorem~\ref{thm:refined}, since 
$$\poly_n(t,u)=\sum_{\sigma\in\S_n}\poly_n^\sigma(t,u)=\sum_{\sigma\in\S_n}t^{\des(\sigma)}N_n(t,u)=A_n(t)N_n(t,u).$$

We conclude this section with two statements about the symmetry of the distributions of descents and weak descents on the set of nonnesting permutations having a fixed underlying~$\sigma$. These are simple consequences of Theorem~\ref{thm:refined} and the palindromicity of $N_n(t,t)$ and $N_n(t,1)$, respectively.

\begin{corollary}\label{cor:symmetric_wdes_sigma}
For each $\sigma\in\S_n$, the distribution of the number of weak descents on $\C^\sigma_n$ is symmetric, in the sense that 
$$|\{\pi\in\C_n^\si:\wdes(\pi)-\des(\sigma)=r\}|=|\{\pi\in\C_n^\si:\wdes(\pi)-\des(\sigma)=n+1-r\}|$$
for all $r$.
\end{corollary}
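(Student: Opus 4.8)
The plan is to reduce the statement to the palindromicity of the Narayana polynomial $N_n(t,t)$ recorded in Equation~\eqref{eq:Nttpali}, using Theorem~\ref{thm:refined} to pin down the generating function of $\wdes$ on $\C_n^\sigma$ exactly.

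First I would specialize the bivariate polynomial $\poly_n^\sigma(t,u)$ at $u=t$. Since $\wdes(\pi)=\des(\pi)+\plat(\pi)$, this specialization collapses the two statistics into the single weak-descent statistic, giving $\poly_n^\sigma(t,t)=\sum_{\pi\in\C_n^\sigma}t^{\des(\pi)+\plat(\pi)}=\sum_{\pi\in\C_n^\sigma}t^{\wdes(\pi)}$. By Theorem~\ref{thm:refined}, setting $u=t$ there, the right-hand side equals $t^{\des(\sigma)}N_n(t,t)$.

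Next I would factor out $t^{\des(\sigma)}$ to obtain the generating function of the shifted statistic, namely $\sum_{\pi\in\C_n^\sigma}t^{\wdes(\pi)-\des(\sigma)}=N_n(t,t)$. Consequently the number of $\pi\in\C_n^\sigma$ with $\wdes(\pi)-\des(\sigma)=r$ is precisely the coefficient of $t^r$ in $N_n(t,t)$, and likewise the count with $\wdes(\pi)-\des(\sigma)=n+1-r$ is the coefficient of $t^{n+1-r}$.

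Finally I would invoke the palindromicity~\eqref{eq:Nttpali}, which asserts that the coefficient of $t^r$ in $N_n(t,t)$ equals that of $t^{n+1-r}$; reading this off both sides of the previous identity yields the claimed equality of cardinalities for parameters $r$ and $n+1-r$. There is no genuine obstacle here, as Theorem~\ref{thm:refined} carries all the combinatorial weight; the only point requiring care is that the exponent shift by $\des(\sigma)$ must be applied consistently on both sides, so that the palindromic center of $N_n(t,t)$ lands correctly at $(n+1)/2$ and the roles of $r$ and $n+1-r$ come out as stated.
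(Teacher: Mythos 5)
Your argument is correct and is essentially the paper's own proof: the paper states in Section~\ref{sec:results} that this corollary is a ``simple consequence of Theorem~\ref{thm:refined} and the palindromicity of $N_n(t,t)$,'' which is precisely the specialization $u=t$ followed by the coefficient comparison via Equation~\eqref{eq:Nttpali} that you carry out. The paper also gives a second, bijective proof in Section~\ref{sec:proofs_symmetry} using the involution $\phi_\sigma^{-1}\circ\LKC\circ\phi_\sigma$ built from the Lalanne--Kreweras involution, but that is supplementary to the generating-function argument you present.
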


\begin{corollary}\label{cor:symmetric_des_sigma}
For each $\sigma\in\S_n$, the distribution of the number of descents on $\C^\sigma_n$ is symmetric, in the sense that 
$$|\{\pi\in\C_n^\si:\des(\pi)-\des(\sigma)=r\}|=|\{\pi\in\C_n^\si:\des(\pi)-\des(\sigma)=n-1-r\}|$$
for all $r$.
\end{corollary}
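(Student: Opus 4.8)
The plan is to reduce the statement entirely to Theorem~\ref{thm:refined} by specializing $u=1$ and reinterpreting the resulting identity in terms of the shifted statistic $\des(\pi)-\des(\sigma)$. First I would set $u=1$ in the refinement to obtain
$$\poly_n^\sigma(t,1)=\sum_{\pi\in\C_n^\sigma}t^{\des(\pi)}=t^{\des(\sigma)}N_n(t,1).$$
Dividing both sides by $t^{\des(\sigma)}$ shows that the generating polynomial for the shifted statistic is exactly the single-variable Narayana polynomial:
$$\sum_{\pi\in\C_n^\sigma}t^{\des(\pi)-\des(\sigma)}=N_n(t,1).$$
At this point the claimed symmetry is equivalent to the assertion that $N_n(t,1)$ is palindromic of degree $n-1$, i.e., that $N_n(t,1)=t^{n-1}N_n(1/t,1)$, with the coefficient of $t^r$ equal to the coefficient of $t^{n-1-r}$.

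The second step is to establish that palindromicity, which is already available in the preliminaries. Combining Equation~\eqref{eq:Nt1Ntt}, which gives $tN_n(t,1)=N_n(t,t)$, with the palindromicity of the Narayana polynomial in Equation~\eqref{eq:Nttpali}, namely $N_n(t,t)=t^{n+1}N_n(1/t,1/t)$, and applying \eqref{eq:Nt1Ntt} once more with $t$ replaced by $1/t$ to rewrite $N_n(1/t,1/t)=(1/t)N_n(1/t,1)$, I obtain $N_n(t,1)=t^{n-1}N_n(1/t,1)$ after cancelling a factor of $t$. (This is the same identity invoked in the proof of Corollary~\ref{cor:symmetric_des}.) Comparing coefficients of $t^r$ on the two sides then yields that the number of permutations $\pi\in\C_n^\sigma$ with $\des(\pi)-\des(\sigma)=r$ equals the number with $\des(\pi)-\des(\sigma)=n-1-r$, which is the desired symmetry.

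I do not expect any genuine obstacle here: the entire combinatorial content has been front-loaded into Theorem~\ref{thm:refined}, and once $u=1$ is substituted the statement collapses to the palindromicity of a one-variable Narayana polynomial that was already proved (bijectively, via $\rho$ together with the Lalanne--Kreweras involution) in the introductory material. The only point requiring mild care is the bookkeeping of the degree shift: the factor $t^{\des(\sigma)}$ must be divided out cleanly so that the symmetry is centered at $(n-1)/2$, rather than around $\des(\sigma)+(n-1)/2$. This is exactly why the statement is phrased in terms of $\des(\pi)-\des(\sigma)$ rather than $\des(\pi)$ itself.
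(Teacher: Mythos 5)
Your argument is correct, and it is exactly the derivation the paper itself gives for this corollary in Section~\ref{sec:results}, where it is stated to be a ``simple consequence of Theorem~\ref{thm:refined} and the palindromicity of \dots $N_n(t,1)$'': setting $u=1$ in $\poly_n^\sigma(t,u)=t^{\des(\sigma)}N_n(t,u)$, dividing out $t^{\des(\sigma)}$, and invoking $N_n(t,1)=t^{n-1}N_n(1/t,1)$, which follows from Equations~\eqref{eq:Nt1Ntt} and~\eqref{eq:Nttpali} precisely as you chain them. The only thing you do not reproduce is the paper's second, \emph{bijective} proof in Section~\ref{sec:proofs_symmetry}, which realizes the symmetry by an explicit involution $\ol\Phi_\si=R\circ\Phi_{\si^R}\circ R$ on $\C_n^\si$ built from the reversal map, the bijection $\phi_\si$ to $\C_n^{\id}$, and the Lalanne--Kreweras involution; that construction is additional content beyond what the corollary's statement requires, and its absence is not a gap in your proof.
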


Bijective proofs of Corollaries~\ref{cor:symmetric_wdes_sigma} and~\ref{cor:symmetric_des_sigma} will be given in Section~\ref{sec:proofs_symmetry}.

\section{The distribution of descents and plateaus}\label{sec:proofs}

In this section we prove Theorem~\ref{thm:refined}, which in turn implies Theorem~\ref{thm:main}.

A nonnesting permutation $\pi\in\C_n$ is uniquely determined by the underlying nonnesting matching $\mat(\pi)$ and the subsequence of first (equivalently, second) copies $\s(\pi)\in\S_n$. We denote by $\dy(\pi)$ the Dyck path that corresponds to $\mat(\pi)$ via the standard bijection between nonnesting matchings and Dyck paths described in Section~\ref{sec:Narayana}. This path is obtained by reading $\pi$ from left to right and taking an $\E$ step for each first copy of an element, and an $\N$ step for each second copy; see Figure~\ref{fig:visualization}.

The map $\pi\mapsto(\s(\pi),\dy(\pi))$ is a bijection between $\C_n$ and $\S_n\times\D_n$. 
Given $\sigma\in\S_n$ and $D\in\D_n$, we denote by $\pi=\pi(\sigma,D)$ the unique permutation in $\C_n$ such that $\s(\pi)=\sigma$ and $\dy(\pi)=D$. 
Let $\id_n=12\dots n\in\S_n$ denote the identity permutation; we will omit the subscript $n$ when it is clear from the context.
In Section~\ref{sec:stat}, we show that $\poly_n^{\id}(t,u)=N_n(t,u)$, by interpreting $\des(\pi)$ and $\plat(\pi)$ as Dyck path statistics. 

The computation of $\poly_n^{\si}(t,u)$ for arbitrary $\sigma$ will be done in two stages. 
In Section~\ref{sec:Des} we prove that, assuming $n$ is fixed, the polynomial $\poly_n^\sigma(t,u)$ depends only on $\Des(\sigma)$, by showing that swapping any two non-adjacent entries of $\sigma$ with consecutive values does not change this polynomial.
For any $S\subseteq[n-1]$, this allows us to define define $\poly_n^S(t,u)=\poly_n^\sigma(t,u)$, where $\sigma$ is any permutation in $\S_n$ with $\Des(\sigma)=S$, and to reduce the problem to the case where $\sigma$ has a very specific form.  In Section~\ref{sec:removal}, we prove bijectively that if $S'$ is obtained from $S$ by removing its largest element, then $\poly_n^{S}(t,u)=t \poly_n^{S'}(t,u)$, so one can repeatedly apply this fact to remove all the elements from the descent set of $\sigma$, reducing the problem to the case of the identity permutation.
All our proofs are bijective.

\subsection{Descents as Dyck path statistics}\label{sec:stat}

Let $\pi=\pi(\si,D)\in\C_n$, where $\si\in\S_n$ and $D\in\D_n$. We draw $D$ as a path from the lower-left corner to the upper-right corner of an $n\times n$ grid, staying weakly below the diagonal, and we  label the columns of the grid with $\sigma_1,\sigma_2,\dots,\sigma_n$ from left to right, and similarly the rows of the grid from bottom to top.
The permutation $\pi$ can easily be obtained from this representation by following the steps of the path and reading the labels of the columns and rows traversed. See  Figure~\ref{fig:visualization} for an example.

\begin{figure}[htb]
\centering
\begin{tikzpicture}[scale=.6]
  \begin{scope}[shift={(-17,2)}]
   \foreach [count=\i] \j in {2,5,2,5,3,1,6,3,7,4,1,6,7,4}
        \node[pnt,label=below:$\j$] at (\i,0)(\i) {};
   \arc{1}{3} \arc{2}{4} \arc{5}{8} \arc{6}{11} \arc{7}{12} \arc{9}{13} \arc{10}{14}
   \end{scope}
      \grid{7}
      \def\sig{{0,2,5,3,1,6,7,4}}
      \labelgrid{7}
      \draw[very thick,blue](0,0) \e\e\n\n\e\e\e\n\e\e\n\n\n\n;
      \draw[blue] (4.3,1.5) node {$D$};
\end{tikzpicture}
\caption{Representation of the permutation $\pi=25253163741674\in\C_7$ as a matching (left) and as Dyck path $D=\dy(\pi)$ 
in a labeled grid (right), where $\s(\pi)=2531674$.}
\label{fig:visualization}
\end{figure}
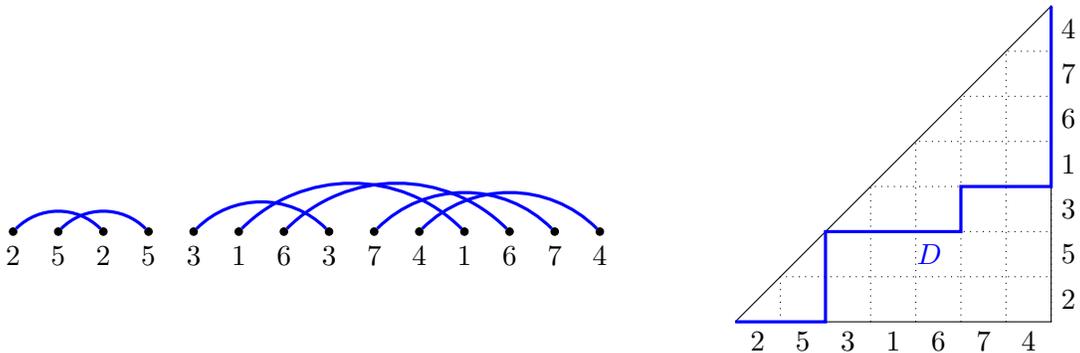

Under this representation, plateaus of $\pi$ correspond to low peaks of $D$, namely, occurrences of adjacent steps $\E\N$ that touch the diagonal, and so 
\begin{equation}\label{eq:plat=lpea}
    \plat(\pi)=\lpea(D).
\end{equation}
Descents of $\pi$ correspond to occurrences in $D$ of any of the following pairs of adjacent steps:
\begin{enumerate}[(i)]
\item $\E\E$ where the labels of the corresponding columns form a descent of $\sigma$,
\item $\N\N$ where the labels of the corresponding rows form a descent of $\sigma$,
\item $\E\N$ where the label of the column is larger than the label of the row,
\item $\N\E$ where the label of the row is larger than the label of the column.
\end{enumerate}

In the special case that $\sigma$ is the identity permutation, all the descents of $\pi$ come from case (iii), and more specifically, from high peaks, namely, occurrences of $\E\N$ that do not touch the diagonal. Thus, 
\begin{equation}\label{eq:des=hpea}
    \des(\pi)=\hpea(D)
\end{equation} in this case, and it follows that 
\begin{equation} \label{eq:identity} 
\poly_n^{\id}(t,u)=\sum_{\pi\in\C_n^{\id}}t^{\des(\pi)}u^{\plat(\pi)}=\sum_{D\in\D_n}t^{\hpea(D)}u^{\lpea(D)}=N_n(t,u).
\end{equation}

At the other extreme, if $\sigma$ is the monotone decreasing permutation $n\ldots21$, the descents of $\pi$ correspond to occurrences of $\E\E$, $\N\N$ and $\N\E$ in $D$. We call these types of occurrences {\em non-peaks}. It follows that $\des(\pi)=2n-1-\pea(D)$, from where
$$\poly_n^{n\ldots21}(t,u)
=\sum_{\pi\in\C^{n\ldots21}_n}t^{\des(\pi)}u^{\plat(\pi)}
=\sum_{D\in\D_n}t^{2n-1-\pea(D)}u^{\lpea(\pi)}
=t^{2n-1} N_n(1/t,u/t)=t^{n-1}N_n(t,u),$$
using Equation~\eqref{eq:Ntupali}.

In general, to easily visualize the descents of $\pi(\sigma,D)$ in the drawing of $D$ on the labeled grid, we will mark the locations on the grid that cause these descents as follows.
For each descent of $\sigma$, say $\sigma_i>\sigma_{i+1}$, we place a red vertical line separating the columns labeled $\sigma_i$ and $\sigma_{i+1}$, and a red horizontal line separating the rows with these labels. Additionally, for every $i,j$, we place a red notch in the upper-left (resp.\ lower-right) corner of the cell in the row with label $\sigma_i$ and the column with label $\sigma_j$ if $\sigma_i>\sigma_j$ (resp.\ $\sigma_i<\sigma_j$). Let $\Gamma_\sigma$ denote the portion of this decorated grid below the diagonal;
see the examples in Figure~\ref{fig:grid}. 

\begin{figure}[htb]
\centering
\begin{tikzpicture}[scale=.6]
      \grid{7}
      \def\sig{{0,2,5,3,1,6,7,4}}
      \labelgridDes{7}
      \decorategrid{7}
\end{tikzpicture}
\hspace{20mm}
 \begin{tikzpicture}[scale=.6]
      \grid{7}
      \def\sig{{0,2,5,3,1,6,7,4}}
      \labelgridDes{7}
      \decorategrid{7}
      \draw[very thick,blue](0,0) \e\e\n\n\e\e\e\n\e\e\n\n\n\n;
      \draw[blue] (4.3,1.5) node {$D$};
\end{tikzpicture}
\caption{The grid $\Gamma_\sigma$ for $\sigma=2531674$ (left), and the Dyck path $D$ from Figure~\ref{fig:visualization} drawn on it, showing that $\des_\sigma(D)=7$ (right).}
\label{fig:grid}
\end{figure}
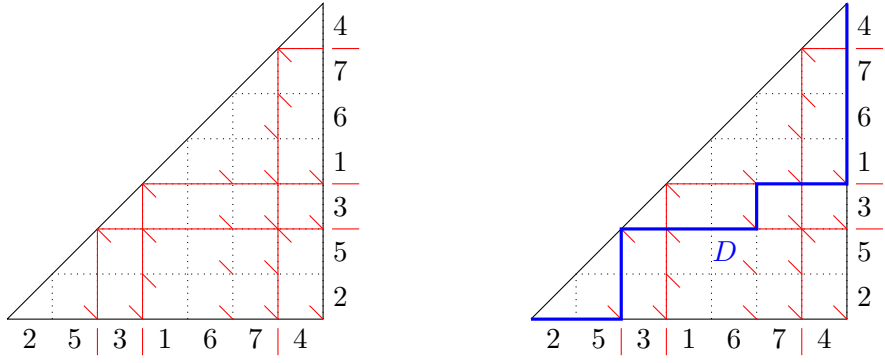

Each descent of $\pi(\sigma,D)$ corresponds to a pair of consecutive steps in $D$ of one of these two types:
\begin{itemize} 
\item a {\em double-step}, defined as a pair $\E\E$ crossing
a red vertical line in $\Gamma_\sigma$, or a pair $\N\N$ crossing a red horizontal line;
\item a {\em corner}, defined as a pair $\E\N$ or $\N\E$ bending around a red notch in $\Gamma_\sigma$. 
\end{itemize}

For each $\sigma\in\S_n$, let $\des_\sigma$ be the Dyck path statistic defined by $$\des_\sigma(D)=\des(\pi(\sigma,D))$$ for $D\in\D_n$.
By construction, $\des_\sigma(D)$ equals the number of double-steps and corners of $D$ in $\Gamma_\sigma$. This defines $n!$ different statistics on $\D_n$, one for each $\sigma\in\S_n$. Theorem~\ref{thm:refined} implies that each statistic $\des_\sigma$ has a (shifted) Narayana distribution, even when refined by the statistic $\lpea$, namely
$$\sum_{D\in\D_n} t^{\des_\sigma(D)}u^{\lpea(D)}=t^{\des(\sigma)} N_n(t,u).$$
The number of high peaks and the number of non-peaks are particular cases of the statistic $\des_\sigma$ when $\sigma=\id_n$ and $\sigma=n\dots 21$, respectively.

\subsection{Switching nonadjacent entries of $\sigma$}\label{sec:Des}

Having verified that Theorem~\ref{thm:refined} holds when $\sigma$ is the identity permutation, the proof for arbitrary $\sigma\in\S_n$ will be obtained by constructing a bijection $$\phi_\sigma:\C^\sigma_n\to\C^{\id}_n$$ satisfying that $\des(\phi_\sigma(\pi))=\des(\pi)-\des(\sigma)$ and $\plat(\phi_\sigma(\pi))=\plat(\pi)$ for all $\pi\in\C_n^\si$. This will be done in two stages, which together transform the underlying permutation $\sigma$ into the identity permutation.

In this subsection, the basic operation that we apply to $\sigma$ is a transposition of two entries with consecutive values $k$ and $k+1$ that are not in adjacent positions. Denote the resulting permutation by $s_k\sigma$, where $s_k=(k,k+1)$ in cycle notation. The assumption that $k$ and $k+1$ are not adjacent in $\sigma$ guarantees that $\Des(s_k\sigma)=\Des(\sigma)$.

\begin{lemma}\label{lem:nonadjacent}
Let $\sigma\in\S_n$ and $k\in[n-1]$, and suppose that the entries $k$ and $k+1$ are not in adjacent positions of $\sigma$.
Then there exists a bijection $f_k:\C_n^\sigma\to\C_n^{s_k\sigma}$ that preserves the pair of statistics $(\des,\plat)$. In particular, $\poly_n^\sigma(t,u)=\poly_n^{s_k\sigma}(t,u)$.
\end{lemma}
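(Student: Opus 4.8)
The plan is to construct the bijection $f_k$ by working directly with the Dyck path representation $\pi = \pi(\sigma, D)$, exploiting the fact that swapping the values $k$ and $k+1$ affects only a small, localized portion of the decorated grid $\Gamma_\sigma$. Since $\pi \mapsto (\s(\pi), \dy(\pi))$ is a bijection onto $\S_n \times \D_n$, and both $\sigma$ and $s_k\sigma$ give permutations with the same descent set (because $k, k+1$ are nonadjacent), the underlying path $D$ can be kept as the ``base'' object, and the map $f_k$ should amount to transforming $D$ in a way that compensates for the relabeling of two columns/rows of the grid.

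Concretely, let me first understand how passing from $\sigma$ to $s_k\sigma$ changes the grid decoration. Say $k$ sits in position $a$ and $k+1$ in position $b$ of $\sigma$, with $|a - b| \ge 2$. Because the descent set is unchanged, the red vertical and horizontal lines (the double-step markers) are in identical positions in $\Gamma_\sigma$ and $\Gamma_{s_k\sigma}$. What changes is only the pattern of red notches (the corner markers) arising from comparisons $\sigma_i > \sigma_j$ versus $\sigma_i < \sigma_j$: swapping the values $k$ and $k+1$ flips the relative order of these two entries against each other and against any entry with value between them, but there are no such intermediate values since $k$ and $k+1$ are consecutive. So the only comparison that flips is the one between the entry of value $k$ and the entry of value $k+1$ themselves. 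This means $\Gamma_\sigma$ and $\Gamma_{s_k\sigma}$ differ in the notch decoration of exactly the two cells sitting at the intersection of rows/columns labeled $k$ and $k+1$ (one above the diagonal, one below, or both relevant depending on the geometry).

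Thus the plan is to define $f_k$ as the map that keeps $D$ fixed whenever the path steps in the neighborhood of those two distinguished cells are unaffected by the notch flip, and performs a local ``toggle'' of the path in the one region where the corner marker changes, so that a corner is created exactly when one is destroyed. The key invariants to verify are that (1) the toggled path is still a valid Dyck path in $\D_n$, (2) the statistic $\lpea = \plat$ is unchanged, since the low peaks depend only on where the path touches the diagonal and the toggle is confined to an off-diagonal cell (using $|a-b|\ge 2$ to ensure these cells are not on the diagonal), and (3) the number of corners plus double-steps, i.e.\ $\des_\sigma(D) = \des_{s_k\sigma}(f_k(D))$, is preserved because the single notch that disappears in one grid reappears as a corner-creating configuration in the other.

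\textbf{The main obstacle} I anticipate is carefully handling the geometry: I must enumerate the possible local configurations of the two steps of $D$ passing through (or near) the affected cells, and show that in each case there is a well-defined, involutive-compatible local modification preserving validity below the diagonal. In particular, I expect the crux to be verifying that a pair $\E\N$ bending around a notch in one grid corresponds under the toggle to exactly one new descent-causing configuration in the other grid, and that no spurious low peak is created or destroyed — which is where the nonadjacency hypothesis $|a-b| \ge 2$ is essential, as it keeps the relevant cells strictly below the diagonal and separates them from the diagonal contacts governing $\plat$. Showing $f_k$ is a genuine bijection (ideally an involution after identifying $\C_n^{s_k\sigma}$ with paths on $\Gamma_{s_k\sigma}$) should then follow by symmetry of the construction.
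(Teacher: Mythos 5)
Your proposal is correct and takes essentially the same approach as the paper: both localize the difference between $\Gamma_\sigma$ and $\Gamma_{s_k\sigma}$ to the single below-diagonal cell whose row and column are labeled by $\{k,k+1\}$ (the swapped-label cell lies above the diagonal, so only one cell is relevant), and define the map as the toggle $\E\N\leftrightarrow\N\E$ of the two path steps on that cell's boundary, leaving the path fixed otherwise. The paper's proof carries out exactly the case analysis and invariance checks ($\des$, $\lpea$, involutivity) that you anticipate.
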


\begin{remark}\label{rem:paths}
For $\sigma,\tau\in\S_n$, constructing a bijection $f:\C_n^\sigma\to\C_n^\tau$ that preserves the pair of statistics $(\des,\plat)$ is equivalent to describing a bijection $\hf:\D_n\to\D_n$ such that $\des_{\tau}(\hf(D))=\des_\sigma(D)$ and $\lpea(\hf(D))=\lpea(D)$ for all $D\in\D_n$. Indeed, the desired bijection from $\C_n^\sigma$ to $\C_n^\tau$ is then given by $f(\pi(\sigma,D))=\pi(\tau,\hf(D))$.
\end{remark}

\begin{proof}
We describe a bijection $\hfk:\D_n\to\D_n$ with the properties in Remark~\ref{rem:paths}, where $\tau=s_k\sigma$.
Let $x$ be the cell in $\Gamma_\sigma$ whose row and column are labeled by $\{k,k+1\}$. Note that $x$ is unique, since the cell where the row and column labels are swapped would lie above the diagonal.

Let $D\in\D_n$. If $D$ has steps on two sides of $x$, let $\hfk(D)$ be the path obtained from $D$ by switching these two steps, i.e., from $\N\E$ to $\E\N$ and vice versa;  otherwise, let $\hfk(D)=D$.

To see that $\des_{\tau}(\hfk(D))=\des_\sigma(D)$, note that the only difference between the grids $\Gamma_\sigma$ and $\Gamma_{\tau}$ is that the cell $x$ has a notch in the upper-left corner in one of these grids, and a notch in the lower-right corner in the other. Thus, $\des_{\sigma}$ and $\des_{\tau}$ coincide on all paths except for those that pass through these notches in cell $x$, in which case our construction ensures that $D$ passes through the notch in $\Gamma_\sigma$ if an only if $\hfk(D)$ passes through the notch in $\Gamma_{\tau}$. Additionally, $\lpea(\hfk(D))=\lpea(D)$, since the cell $x$ is not on the diagonal.

Finally, note that the inverse of $\hfk$ is given by $\hfk$ itself. Indeed, since $k$ and $k+1$ are not adjacent in $\sigma$, they are not adjacent in $\tau=s_k\sigma$ either. Additionally, $s_k\tau=\sigma$. The cell in $\Gamma_\tau$ whose row and column are labeled by $\{k,k+1\}$ is the same cell $x$ that had this property in $\Gamma_\sigma$, and so $\hfk(\hfk(D))=D$.
\end{proof}

The bijection on permutations $f_k:\C_n^\sigma\to\C_n^{s_k\sigma}$ that results from the above proof can be described directly as follows: for $\pi\in\C_n^\sigma$, its image $f_k(\pi)\in\C_n^{s_k\sigma}$ is obtained by swapping copies of $k$ with copies of $k+1$ in $\pi$, as long as they are not adjacent to each other, as illustrated in this diagram, where $\rect$ represents a non-empty block of entries, and the brown square illustrates the equivalent changes in cell $x$ of the grid:
$$\begin{array}{c@{\qquad}ccc@{\qquad}c}
&k\rect k{+}1 \rect k \rect k{+}1 & \leftrightarrow & k{+}1\rect k \rect k{+}1 \rect k&\\
&k\rect k \rect k{+}1 \rect k{+}1 & \leftrightarrow & k{+}1\rect k{+}1 \rect k \rect k&\\
   \begin{tikzpicture}[scale=.5]
    \fill[brown!40] (0,0) rectangle (1,1);
    \draw[red] (1,0) -- ++(-.3,.3);
    \draw[thin,dotted] (0,0) rectangle (1,1) ;
    \draw[very thick,blue] (0,0) \e\n;
   \end{tikzpicture}
&k\rect k{+}1\ k \rect k{+}1 & \leftrightarrow & k{+}1\rect k{+}1\  k \rect k&
   \begin{tikzpicture}[scale=.5]
    \fill[brown!40] (0,0) rectangle (1,1);
    \draw[red] (0,1) -- ++(.3,-.3);
    \draw[thin,dotted] (0,0) rectangle (1,1) ;
    \draw[very thick,blue] (0,0) \n\e;
   \end{tikzpicture} \\
   \begin{tikzpicture}[scale=.5] 
    \fill[brown!40] (0,0) rectangle (1,1);
    \draw[red] (1,0) -- ++(-.3,.3);
    \draw[thin,dotted] (0,0) rectangle (1,1) ;
    \draw[very thick,blue] (0,0) \n\e;
   \end{tikzpicture} 
   &k\rect k\  k{+}1 \rect k{+}1 & \leftrightarrow & k{+}1\rect k\ k{+}1 \rect k&
    \begin{tikzpicture}[scale=.5]
    \fill[brown!40] (0,0) rectangle (1,1);
    \draw[red] (0,1) -- ++(.3,-.3);
    \draw[thin,dotted] (0,0) rectangle (1,1) ;
    \draw[very thick,blue] (0,0) \e\n;
   \end{tikzpicture}
\end{array}$$
Note that, since $k$ and $k+1$ are not adjacent in $\sigma$, the first copies of $k$ and $k+1$ in $\pi$ are not adjacent, and neither are the second copies. However, it can happen that the second copy of one of these entries is adjacent to the first copy of the other; in that case, these copies are left unchanged. The permutation $f_k(\pi)$ has the same number of descents and the same number of plateaus as~$\pi$. 

Let $\sim$ be the transitive closure of the binary relation in $\S_n$ that relates $\sigma$ and $s_k\sigma$ whenever entries $k$ and $k+1$ are not adjacent in $\sigma$. Next we show that the equivalence classes of this transitive closure consist of all permutations with a fixed descent set, and we provide a way to pick a canonical representative of each class.

For a permutation $\sigma\in\S_n$, denote its set of {\em non-inversions} by
$$\nI(\sigma)=\{(i,j):1\le i<j\le n,\,\sigma_i<\sigma_j\}.$$
We say that $\sigma$ is {\em reverse-layered} if it can be decomposed as a sequence of monotone increasing blocks where the entries in each block are larger than the entries in the next block. 
For example, $789\ba6\ba345\ba12\in\S_9$ is reverse-layered, where we use vertical bars $\ba$ to indicate the positions of the descents. 
Each block in this decomposition is called a {\em layer}.
For each subset $S=\{d_1,d_2,\dots,d_s\}_<\subseteq[n-1]$, where the subscript $<$ indicates that elements are written in increasing order, denote by 
\begin{equation}\label{eq:la}
    \lambda^S=(n{-}d_1{+}1)(n{-}d_1{+}2)\ldots n\ba(n{-}d_2{+}1)(n{-}d_2{+}2) \ldots (n{-}d_1)\ba\dots\ba12\dots (n{-}d_s)
\end{equation}
the unique reverse-layered permutation in $\S_n$ such that $\Des(\lambda^S)=S$. Note that non-inversions of $\la^S$ occur only between elements in the same layer, and so
$$\card{\nI(\la^S)}=\binom{d_1}{2}+\binom{d_2-d_1}{2}+\dots+\binom{d_s-d_{s-1}}{2}+\binom{n-d_s}{2}.$$
Additionally, any $\sigma\in\S_n$ with $\Des(\sigma)=S$ satisfies $\nI(\la^S)\subseteq \nI(\sigma)$, since any pair of positions with no descent in between must form a non-inversion. 

\begin{lemma}\label{lem:equivalence}
For $\sigma,\tau\in\S_n$, we have $\sigma\sim\tau$ if and only if $\Des(\sigma)=\Des(\tau)$.
\end{lemma}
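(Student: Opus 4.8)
The plan is to prove both directions of the equivalence $\sigma\sim\tau \iff \Des(\sigma)=\Des(\tau)$. The forward direction is essentially immediate: by Lemma~\ref{lem:nonadjacent}, whenever $k$ and $k+1$ are non-adjacent in $\sigma$, applying $s_k$ does not change the descent set, so $\Des$ is constant on each $\sim$-equivalence class. Hence $\sigma\sim\tau$ implies $\Des(\sigma)=\Des(\tau)$. All the work is in the converse.

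For the converse, I would fix $S\subseteq[n-1]$ and show that every $\sigma$ with $\Des(\sigma)=S$ is $\sim$-equivalent to the canonical reverse-layered permutation $\la^S$ defined in Equation~\eqref{eq:la}; transitivity of $\sim$ then forces all such $\sigma$ into a single class. The natural measure of progress toward $\la^S$ is the number of non-inversions: recall from the excerpt that $\nI(\la^S)\subseteq\nI(\sigma)$ for every $\sigma$ with $\Des(\sigma)=S$, with equality exactly when $\sigma=\la^S$ (since a permutation is determined by its set of non-inversions). So I would argue that as long as $\sigma\ne\la^S$, there is an allowable swap $s_k$ that strictly decreases $\card{\nI(\sigma)}$ while keeping the descent set equal to $S$; iterating then terminates at $\la^S$.

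The heart of the argument, and the step I expect to be the main obstacle, is producing such a descent-preserving, non-inversion-decreasing swap. The key observation is that an allowable move swaps consecutive \emph{values} $k,k+1$ that sit in \emph{non-adjacent positions}, and such a swap removes exactly one non-inversion (the pair of positions holding $k$ and $k+1$, which is a non-inversion precisely when $k$ precedes $k+1$) without creating any new ones, since no other entry has value strictly between $k$ and $k+1$. So I must show: if $\sigma\ne\la^S$ but $\Des(\sigma)=S$, then some pair of consecutive values $k,k+1$ occurs in non-adjacent positions with $k$ appearing \emph{before} $k+1$. Suppose not; then for every such pair either the positions are adjacent or $k+1$ precedes $k$. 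I would derive a contradiction by examining the layer structure determined by $S$: within each maximal ascending run of $\sigma$ (a block between consecutive descents) the entries increase, and the constraint $\nI(\la^S)\subseteq\nI(\sigma)$ pins down which values must lie in which block. If $\sigma\ne\la^S$, some value is out of its canonical layer, and tracing the smallest misplaced consecutive pair $k,k+1$ shows it must appear in the order $k$ before $k+1$ in non-adjacent positions, contradicting the assumption.

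An alternative, possibly cleaner, route for this obstacle is to induct on $\card{\nI(\sigma)}-\card{\nI(\la^S)}$ directly: if this quantity is positive, then $\nI(\sigma)\supsetneq\nI(\la^S)$, so $\sigma$ has a non-inversion $(i,j)$ that $\la^S$ lacks; choosing such a pair with $\sigma_j-\sigma_i$ minimal and then localizing to a witnessing pair of consecutive values yields the required move, and I would check that minimality guarantees the two chosen values are consecutive and non-adjacent in position with the smaller one first. Either way, once the existence of a strictly non-inversion-reducing allowable swap is established, a finite descent argument completes the proof, and combining with the forward direction gives the stated equivalence.
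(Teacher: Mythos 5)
Your proposal follows essentially the same route as the paper's proof: the forward direction is immediate from the definition of $\sim$, and the converse reduces every $\sigma$ with $\Des(\sigma)=S$ to the reverse-layered permutation $\la^S$ by repeatedly applying swaps $s_k$ of non-adjacent, consecutive values that preserve the descent set and strictly decrease $\card{\nI(\sigma)}$. The only (minor) difference is how one justifies that such a swap exists whenever $\sigma\neq\la^S$ --- the paper argues by contraposition that if every consecutive-value pair forming a non-inversion of $\sigma$ were also a non-inversion of $\la^S$ then $\sigma=\la^S$, whereas your second route picks $(i,j)\in\nI(\sigma)\setminus\nI(\la^S)$ with $\sigma_j-\sigma_i$ minimal; that minimality argument does go through (the cases where a value strictly between $\sigma_i$ and $\sigma_j$ sits outside the position interval are ruled out using the containment $\nI(\la^S)\subseteq\nI(\sigma)$).
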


\begin{proof}
If $k$ and $k+1$ are not adjacent in $\sigma$, then $\Des(\sigma)=\Des(s_k\sigma)$. By definition of the equivalence relation, it follows that if $\sigma\sim\tau$, then $\Des(\sigma)=\Des(\tau)$.

To prove the converse, we will show that if $\Des(\sigma)=S=\{d_1,d_2,\dots,d_s\}_<\subseteq[n-1]$, 
and $\lambda=\lambda^S\in\S_n$ is defined by~\eqref{eq:la}, then $\sigma\sim\lambda$. 

Recall that $\nI(\la)\subseteq \nI(\sigma)$. If $\si\neq\la$, there is some pair $(i,j)\in\nI(\sigma)\setminus\nI(\la)$, and so $i<j$, $\si_i<\si_j$ and $\la_i>\la_j$. In fact, one can always find such a pair with the property that $\si_i$ and $\si_j$ are consecutive, that is, $\si_i=k$ and $\si_j=k+1$ for some $k$. This is because if all pairs $i<j$ with this property were non-inversions of $\la$, then we would have $\si=\la$.
The fact that $k=\si_i<\si_j=k+1$ and $\la_i>\la_j$, together with the assumption $\Des(\si)=\Des(\la)$, guarantee that $k$ and $k+1$ are not adjacent in $\si$, so we have $\sigma\sim s_k\sigma$. Additionally,
$\nI(\la)\subseteq \nI(s_k\sigma)=\nI(\si)\setminus\{(i,j)\}$.
If $s_k\sigma=\la$ we are done, otherwise we repeat the same procedure with $s_k\si$ playing the role of $\si$. Since the number of non-inversions decreases by one at each step, we must eventually reach $\la$.
\end{proof}

A sequence of transpositions as in the above proof, transforming $\sigma$ with $\Des(\si)=S$ into the layered permutation $\la=\la^S$, while preserving the descent set and decreasing the number of non-inversions at each step, will be called a {\em valid sequence}. We can use Lemma~\ref{lem:nonadjacent} to turn any valid sequence into a bijection from $\C_n^\sigma$ to $\C_n^\la$, by composing the bijections $f_k$ in the same order as the transpositions $s_k$ in the sequence. 

A concrete valid sequence can be described as follows. At each step, if $\ell$ is the largest entry that is not in the same position in $\sigma$ and in $\la$, and $i$ is the position such that $\lambda_i=\ell$, we apply the transposition $s_k$ where $k=\sigma_i$. 

Indeed, let us show that $k$ and $k+1$ are not adjacent in $\sigma$, and that $k+1$ appears to the right of $k$, by arguing that $\si_j\neq k+1$ for all $j\le i+1$.
If $\sigma_{i+1}=k+1$, then the fact that $\Des(\si)=\Des(\la)$ would imply that
$\ell=\la_i<\la_{i+1}$, and so $\si_{i+1}=\la_{i+1}$ by the choice of $\ell$; but then $\si_{i+1}>\ell> k$, which is a contradiction.
If $j< i$ and there is no descent separating these two positions, we must have $\si_j< \si_i=k$.
Finally, if $j<i$ and there is a descent separating these positions, then $\la_j$ belongs to a previous layer of $\la$, which implies that $\la_j>\ell>k$, and so $\si_j=\la_j$ by the choice of~$\ell$.
It follows that $k+1$ is to the right of $k$ and these entries are not adjacent, and so one can apply $s_k$ to $\sigma$ while preserving the descent set and decreasing the number of non-inversions.

For example, if $\si=281375496$, we would first apply $s_8$ to get
$s_8\si=291375486$, then apply $s_2$ to get $s_2s_8\si=391275486$, and so on, until we get
\begin{equation}\label{eq:validseq}
    s_1s_3s_2s_4s_3s_2s_1s_5s_4s_3s_2s_6s_7s_6s_5s_4s_3s_2s_8\si=89\ba567\ba4\ba23\ba1.
\end{equation}

Using Lemma~\ref{lem:nonadjacent}, the above construction gives a concrete bijection from $\C_n^\sigma$ to $\C_n^\la$. 
Next we show that, in fact, the choice of the valid sequence is irrelevant, in the sense that the resulting bijection from $\C_n^\sigma$ to $\C_n^\la$ does not depend on this choice.
Let $$\Delta(\si)=\nI(\sigma)\setminus\nI(\la),$$
and let $m=\card{\Delta(\si)}$. Any valid sequence has length $m$, and each transposition $s_k$ in the sequence swaps the entries $k$ and $k+1$ located in positions $i$ and $j$, respectively, for some $(i,j)\in\Delta(\si)$.
The corresponding bijection $\hfk:\D_n\to\D_n$ described in the proof of Lemma~\ref{lem:nonadjacent} switches the steps of the path on the boundary of the cell whose row and column are labeled by $\{k,k+1\}$, which is the cell in the $j$th column and the $i$th row of the grid, where we count columns from left to right and rows from bottom to top. We say that $\hfk$ {\em flips} the cell $(j,i)$. Even though different valid sequences translate into different orderings of the $m$ cells
$\Delta'(\si)=\{(j,i):(i,j)\in\Delta(\si)\}$, let us show that cells that are adjacent (i.e., sharing a side) must always be flipped in the same relative order. 

Indeed, if two cells are adjacent horizontally, say $(j,i),(j+1,i)\in\Delta'(\si)$, then $\si_i<\si_j,\si_{j+1}$ (and so $\la_i>\la_j,\la_{j+1}$). Thus, in the flips corresponding to a valid sequence, cell $(j,i)$ will be flipped before (respectively, after) cell $(j+1,i)$ if and only if $\si_j<\si_{j+1}$ (respectively, $\si_j>\si_{j+1}$). 
Similarly, if two cells are adjacent vertically, say $(j,i),(j,i+1)\in\Delta'(\si)$, then $\si_i,\si_{i+1}<\si_j$
(and so $\la_i,\la_{i+1}>\la_j$). In this case, cell $(j,i)$ will be flipped before (respectively, after) cell $(j,i+1)$ if and only if $\si_i>\si_{i+1}$ (respectively, $\si_i<\si_{i+1}$). In all cases, the order in which adjacent cells are flipped is determined by $\Des(\si)$, not by the particular valid sequence.

Since cell flips commute with each other if the flipped cells are not adjacent, the above argument guarantees that the composition of flips that arises from any valid sequence of transpositions is always the same
map on Dyck paths. We denote this bijection by $\hf_\si:\D_n\to\D_n$, and the corresponding bijection on permutations by $f_\si:\C_n^\si\to\C_n^\la$;  see Figure~\ref{fig:hf} for an example. By Lemma~\ref{lem:nonadjacent}, the following property holds.

\begin{figure}[htb]
\centering
\begin{tikzpicture}[scale=.5]
\begin{scope}[shift={(-11,0)}]  
      \fill[brown!40] (3,0) rectangle  (9,1);
      \fill[brown!40] (7,1) rectangle  (8,2);
      \fill[brown!40] (5,2) rectangle  (9,4);
      \fill[brown!40] (7,4) rectangle  (8,6);
      \fill[brown!40] (8,5) rectangle  (9,7);
      \grid{9}
      \def\sig{{0,2,8,1,3,7,5,4,9,6}}
      \labelgridDes{9}
      \decorategrid{9}
      \ord{8}{2}{1}
      \ord{4}{1}{2}\ord{7}{1}{3}\ord{6}{1}{4}\ord{9}{1}{5}\ord{5}{1}{6}\ord{8}{1}{7}
      \ord{8}{5}{8}
      \ord{7}{4}{9}\ord{6}{4}{10}\ord{9}{4}{11}\ord{8}{4}{12}
      \ord{7}{3}{13}\ord{6}{3}{14}\ord{9}{3}{15}\ord{8}{3}{16}
      \ord{9}{6}{17}\ord{8}{6}{18}
      \ord{9}{7}{19}
\end{scope}
      \fill[brown!40] (3,0) rectangle  (9,1);
      \fill[brown!40] (7,1) rectangle  (8,2);
      \fill[brown!40] (5,2) rectangle  (9,4);
      \fill[brown!40] (7,4) rectangle  (8,6);
      \fill[brown!40] (8,5) rectangle  (9,7);
      \grid{9}
      \def\sig{{0,2,8,1,3,7,5,4,9,6}}
      \labelgridDes{9}
      \decorategrid{9}
      \draw[very thick,blue](0,0) \e\n\e\e\n\e\n\e\e\e\n\n\e\n\n\e\n\n;
      \gle{4}{.5}{$\to$}
      \gle{5}{.5}{$\gets$}
      \gle{6}{.5}{$\gets$}
      \gle{7}{.5}{$\to$}
      \gle{8}{.5}{$\gets$}
      \gle{6}{2.5}{$\gets$}
      \gle{7}{2.5}{$\to$}
      \gle{8}{2.5}{$\gets$}
      \gle{6}{3.5}{$\gets$}
      \gle{7}{3.5}{$\to$}
      \gle{8}{3.5}{$\gets$}
      \gle{8}{5.5}{$\gets$}
      \gle{7.5}{1}{$\downarrow$}
      \gle{7.5}{2}{$\uparrow$}
      \gle{7.5}{3}{$\downarrow$}
      \gle{7.5}{4}{$\downarrow$}
      \gle{7.5}{5}{$\uparrow$}
      \gle{5.5}{3}{$\downarrow$}
      \gle{6.5}{3}{$\downarrow$}
      \gle{7.5}{3}{$\downarrow$}
      \gle{8.5}{3}{$\downarrow$}
      \gle{8.5}{6}{$\uparrow$}
      \draw[->] (10.5,4)-- node[above] {$\hf_\si$} (11.5,4);
      \draw (4.5,-1.6) node {$\pi=228183175437954696$};
\begin{scope}[shift={(11,0)}]      
\fill[brown!40] (3,0) rectangle  (9,1);
      \fill[brown!40] (7,1) rectangle  (8,2);
      \fill[brown!40] (5,2) rectangle  (9,4);
      \fill[brown!40] (7,4) rectangle  (8,6);
      \fill[brown!40] (8,5) rectangle  (9,7);
      \grid{9}
      \def\sig{{0,8,9,5,6,7,4,2,3,1}}
      \labelgridDes{9}
      \decorategrid{9}
      \draw[very thick,blue](0,0) \e\n\e\e\n\e\n\e\n\e\e\e\n\n\e\n\n\n;
      \draw (4.5,-1.6) node {$f_\si(\pi)=889596576423741231$};
\end{scope}
\end{tikzpicture}
\caption{An example of the bijection $\hf_\si$ with $\si=281375496$. The cells in $\Delta'(\si)$ are shaded. In the left grid, they are numbered in the order in which they are flipped according to the valid sequence~\eqref{eq:validseq}; in the middle grid, the arrows indicate the relative order in which adjacent cells are flipped in {\em any} valid sequence, which uniquely determines the map $\hf_\si$. In the right grid, this bijection has been applied to the blue Dyck path $\dy(\pi)$. Note that $\des(f_\si(\pi))=9=\des(\pi)$ and $\plat(f_\si(\pi))=1=\plat(\pi)$.}
\label{fig:hf}
\end{figure}
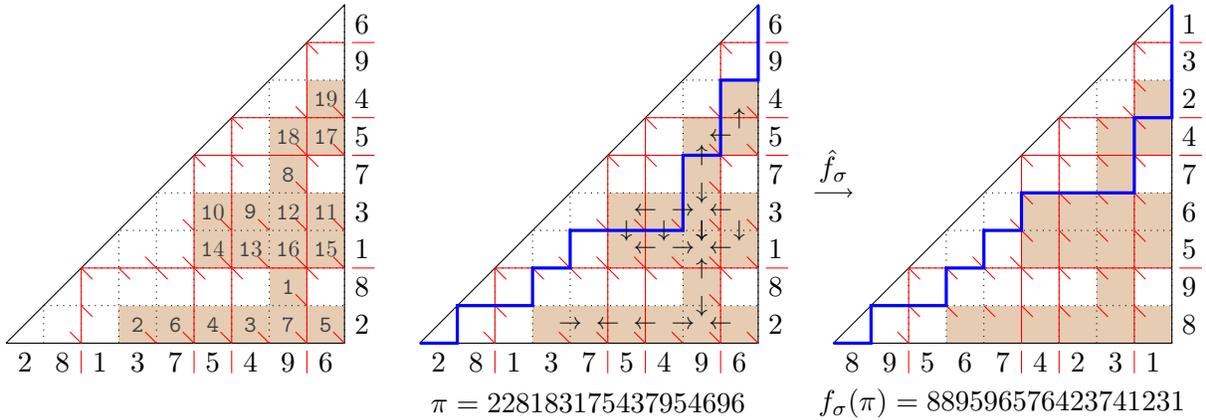

\begin{lemma}
The map $f_\si:\C_n^\si\to\C_n^\la$ defined above is a bijection that preserves the pair of statistics $(\des,\plat)$.
\end{lemma}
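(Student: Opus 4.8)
The plan is to transport the entire problem to Dyck paths by means of Remark~\ref{rem:paths} and then read off the conclusion from the fact that $\hf_\si$ is, by its very construction, a composition of the single-cell-flip bijections $\hfk$ supplied by Lemma~\ref{lem:nonadjacent}. First I would fix any valid sequence of transpositions $s_{k_1},s_{k_2},\dots,s_{k_m}$ carrying $\si$ to $\la=\la^{\Des(\si)}$, and set $\si^{(0)}=\si$ and $\si^{(t)}=s_{k_t}\si^{(t-1)}$ for $1\le t\le m$, so that $\si^{(m)}=\la$. The defining property of a valid sequence is exactly that, at each step $t$, the values $k_t$ and $k_t+1$ are non-adjacent in $\si^{(t-1)}$ and that $\Des(\si^{(t-1)})=\Des(\si)$; this is precisely the hypothesis needed to invoke Lemma~\ref{lem:nonadjacent} at every step, with $\sigma=\si^{(t-1)}$ and $\tau=\si^{(t)}$. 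Thus $\hf_\si=\hf_{k_m}\circ\cdots\circ\hf_{k_1}$ is a composition of $m$ bijections on $\D_n$, hence itself a bijection.

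For the statistics I would argue by telescoping. Lemma~\ref{lem:nonadjacent} gives, for each $t$, the identities $\des_{\si^{(t)}}(\hf_{k_t}(D))=\des_{\si^{(t-1)}}(D)$ and $\lpea(\hf_{k_t}(D))=\lpea(D)$ for all $D\in\D_n$. Feeding the output of one stage into the next and iterating from $t=1$ to $t=m$ yields $\des_\la(\hf_\si(D))=\des_{\si^{(m)}}(\hf_{k_m}(\cdots))=\cdots=\des_{\si^{(0)}}(D)=\des_\si(D)$, while the plateau statistic is preserved at each stage and so $\lpea(\hf_\si(D))=\lpea(D)$. Applying Remark~\ref{rem:paths} with $\tau=\la$, the corresponding map $f_\si(\pi(\si,D))=\pi(\la,\hf_\si(D))$ is then a bijection $\C_n^\si\to\C_n^\la$ preserving the pair $(\des,\plat)$, which is the claim.

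The genuinely delicate point in this argument—that the composition of flips does not depend on which valid sequence is chosen—is not an obstacle here, because it was already settled in the paragraph preceding the lemma (adjacent cells of $\Delta'(\si)$ are always flipped in the same relative order, determined by $\Des(\si)$, and non-adjacent flips commute). Consequently the only thing that requires care in the writeup is the bookkeeping that certifies Lemma~\ref{lem:nonadjacent} is applicable at every intermediate permutation $\si^{(t-1)}$: one must note that a valid sequence keeps the descent set constant (so $\Des(\si^{(t-1)})=\Des(\si)$ throughout) and keeps the relevant consecutive values non-adjacent at the moment each transposition is applied. Both facts are built into the definition of a valid sequence and into the construction in the proof of Lemma~\ref{lem:equivalence}, so the remaining proof reduces to this routine composition-and-telescoping verification.
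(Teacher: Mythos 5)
Your proposal is correct and matches the paper's intended argument: the paper proves this lemma simply by invoking Lemma~\ref{lem:nonadjacent} together with the preceding discussion, and your writeup is just a careful unwinding of that — composing the flip bijections along a valid sequence and telescoping the identities $\des_{\si^{(t)}}(\hf_{k_t}(D))=\des_{\si^{(t-1)}}(D)$ and $\lpea(\hf_{k_t}(D))=\lpea(D)$. You also correctly note that the well-definedness of $\hf_\si$ (independence from the chosen valid sequence) was already established before the lemma, so nothing further is needed.
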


As a consequence, the composition $f_\tau^{-1}\circ f_\si$ gives a bijective proof of the following.

\begin{lemma}\label{lem:Des}
If $\sigma,\tau\in\S_n$ are such that $\Des(\sigma)=\Des(\tau)$, then $\poly_n^\sigma(t,u)=\poly_n^\tau(t,u)$.
\end{lemma}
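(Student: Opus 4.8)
The plan is to reduce both $\si$ and $\tau$ to a common canonical form and then compose the bijections already constructed. Since $\Des(\si)=\Des(\tau)=:S$, the reverse-layered permutation $\la=\la^S$ defined in equation~\eqref{eq:la} is the \emph{same} for both permutations, and it has descent set exactly $S$. This shared representative is what links $\si$ and $\tau$.

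First I would invoke the bijection $f_\si:\C_n^\si\to\C_n^\la$ from the immediately preceding lemma, which preserves the pair $(\des,\plat)$ and is obtained by composing the cell-flip maps $\hfk$ along a valid sequence of transpositions carrying $\si$ to $\la$. Running the identical construction with $\tau$ in place of $\si$ yields a second $(\des,\plat)$-preserving bijection $f_\tau:\C_n^\tau\to\C_n^\la$. The crucial point is that both bijections land in the \emph{same} target set $\C_n^\la$, precisely because the canonical representative $\la=\la^S$ depends only on the descent set $S$ and not on the particular permutation realizing it.

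Then I would form the composition $f_\tau^{-1}\circ f_\si:\C_n^\si\to\C_n^\tau$. As a composition of two bijections, each preserving both $\des$ and $\plat$ (and noting that the inverse of a statistic-preserving bijection is again statistic-preserving, so $f_\tau^{-1}$ is legitimate), this map is itself a $(\des,\plat)$-preserving bijection between $\C_n^\si$ and $\C_n^\tau$. Summing $t^{\des(\pi)}u^{\plat(\pi)}$ over $\pi\in\C_n^\si$ and matching each term with the contribution of its image under $f_\tau^{-1}\circ f_\si$ therefore yields $\poly_n^\si(t,u)=\poly_n^\tau(t,u)$, as desired.

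Honestly, there is no genuine obstacle here: the statement is a formal consequence of the machinery already in place, and the heavy lifting was done in constructing $f_\si$ and verifying that $\hf_\si$ is well-defined independently of the chosen valid sequence. The only points deserving a sentence of care are the two observations highlighted above—that $\la^S$ is determined by the descent set alone, so that $f_\si$ and $f_\tau$ share a common codomain, and that statistic-preserving bijections are closed under inversion and composition—both of which are immediate.
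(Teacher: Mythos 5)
Your argument is exactly the paper's: the paper states that the composition $f_\tau^{-1}\circ f_\si$ gives a bijective proof of the lemma, relying on the previously established fact that $f_\si:\C_n^\si\to\C_n^{\la^S}$ preserves $(\des,\plat)$ and that $\la^S$ depends only on the common descent set $S$. Your proposal is correct and takes essentially the same route, with the same key observations spelled out.
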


For simplicity of notation, we will write $S$ instead of $\la^S$ when this permutation appears as a subscript or subscript. Specifically, we will write 
$\C_n^S$, $\poly_n^S(t,u)$, $\Gamma_S$ and $\des_S$ instead of $\C_n^{\lambda^S}$, $\poly_n^{\lambda^S}(t,u)$, $\Gamma_{\lambda^S}$ and $\des_{\lambda^S}$, respectively.
Note that $\C_n^\emptyset=\C_n^\id$ by definition.
Lemma~\ref{lem:Des} implies that for any $\sigma\in\S_n$ with $\Des(\sigma)=S$, we have $\poly_n^\sigma(t,u)=\poly_n^{S}(t,u)$. 
\delete{
Some examples of grids $\Gamma_S$ are drawn in Figure~\ref{fig:gridS}.

\begin{figure}[htb]
\centering
\begin{tikzpicture}[scale=.55]
      \grid{9}
      \def\sig{{0,7,8,9,6,3,4,5,1,2}}
      \decorategrid{9}
\end{tikzpicture}
\qquad
 \begin{tikzpicture}[scale=.6]
      \grid{8}
      \def\sig{{0,8,7,5,6,2,3,4,1}}
      \decorategrid{8}
\end{tikzpicture}
\caption{The grids $\Gamma_S$ for $S=\{3,4,7\}\subseteq[9-1]$ and $S=\{1,2,4,7\}\subseteq[8-1]$.}
\label{fig:gridS}
\end{figure}
}

\subsection{Removing descents}\label{sec:removal}

In this subsection we show how the distribution of the number of descents and plateaus on $\C_n^S$ changes when removing elements of $S$ one by one until we reach the empty set, so that $\la^S$ becomes the identity permutation. The key step is the following lemma. 

\begin{lemma}\label{lem:remove}
Let $S\subseteq [n-1]$ be nonempty, let $m=\max S$, and let $S'=S\setminus\{m\}$. Let $\ell=\max S'$ if $S'$ is nonempty, otherwise let $\ell=0$. 
There exists a bijection $g:\C_n^S\to\C_n^{S'}$ such that $\des(g(\pi))=\des(\pi)-1$ and $\plat(g(\pi))=\plat(\pi)$ for all $\pi\in\C_n^S$. In particular, $\poly_n^S(t,u)=t\poly_n^{S'}(t,u)$.
\end{lemma}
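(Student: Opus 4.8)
The plan is to invoke Remark~\ref{rem:paths} and build a bijection $\hg:\D_n\to\D_n$ with $\des_{S'}(\hg(D))=\des_S(D)-1$ and $\lpea(\hg(D))=\lpea(D)$ for all $D$; then $g(\pi(\la^S,D))=\pi(\la^{S'},\hg(D))$ is the desired map. First I would record how $\Gamma_S$ and $\Gamma_{S'}$ differ. Removing $m=\max S$ merges the last two layers of $\la^S$, namely the rows and columns $\ell+1,\dots,m$ (the second-to-last layer) and $m+1,\dots,n$ (the last layer). A short comparison of the relative orders of labels shows that the two grids agree everywhere except that $\Gamma_S$ carries the extra red cross (the line $x=m$ together with the line $y=m$), and that inside the rectangle $R$ with rows $\ell+1,\dots,m$ and columns $m+1,\dots,n$ every notch flips from upper-left (in $\Gamma_S$) to lower-right (in $\Gamma_{S'}$). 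Since $R$ lies strictly below the diagonal, all low peaks occur in the common part of the grid, so any bijection fixing the path outside the bottom-right region $[m,n]\times[0,m]$ will automatically preserve $\lpea$.

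Let $W$ be the portion of $D$ crossing the bottom-right region, from the left edge $x=m$ to the top edge $y=m$. Only the differing decorations matter for the descent difference of a \emph{fixed} path, and they give $\des_S(D)-\des_{S'}(D)=(a+b+v)-p$, where $a,b\in\{0,1\}$ record whether $W$ crosses $x=m$ horizontally (an $\E\E$) and $y=m$ vertically (an $\N\N$), $v$ is the number of valleys of $W$ inside $R$, and $p$ the number of peaks of $W$ inside $R$. Using the elementary identity $\#\text{peaks}(W)-\#\text{valleys}(W)=a+b-1$, this simplifies to $\des_S(D)-\des_{S'}(D)=1+(\text{peaks of }W\text{ below row }\ell)-(\text{valleys of }W\text{ below row }\ell)$. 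When $\ell=0$ (equivalently $S=\{m\}$) there are no rows below $\ell$, the difference is exactly $1$ for every $D$, and one may take $\hg$ to be the identity.

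For $\ell>0$ the identity map is off by the net number of peaks that $W$ makes below row $\ell$, so the core of the argument is a surgery on $D$, fixed outside the columns $m+1,\dots,n$, that reshapes the path's excursion below row $\ell$ so as to remove exactly one descent. The intended move lifts the east steps entering the last-layer columns from below the line $y=\ell$ to above it (the local reshaping visible in small examples, e.g.\ $\E\E\E\E\N\N\N\N\mapsto\E\E\E\N\N\E\N\N$), thereby converting each stray peak below $\ell$ into a peak inside $R$ — which is a descent in $\Gamma_{S'}$ — while eliminating one crossing of the red cross. I would specify this reshaping canonically enough to exhibit an explicit inverse, note that it preserves $\lpea$ (it fixes the path off the bottom-right region), and verify $\des_{S'}(\hg(D))=\des_S(D)-1$.

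The main obstacle is exactly this general $\ell>0$ case: because the vertical run of the path along column $m$ produces double-steps $\N\N$ crossing the earlier red horizontal lines $y\in S'$, and because the path may dip below row $\ell$, the bijection cannot be a mere relabeling, and the bookkeeping of descents along the boundary of $R$ is delicate. Once $\hg$ is shown to be an $\lpea$-preserving bijection with $\des_{S'}(\hg(D))=\des_S(D)-1$, the lemma's identity $\poly_n^S(t,u)=t\,\poly_n^{S'}(t,u)$ follows; iterating it to strip the elements of $S$ one at a time down to $\emptyset$, together with $\poly_n^{\emptyset}(t,u)=N_n(t,u)$ from~\eqref{eq:identity}, then gives $\poly_n^S(t,u)=t^{|S|}N_n(t,u)$, which yields Theorem~\ref{thm:refined}.
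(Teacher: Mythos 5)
Your setup is sound and agrees with the paper's: you reduce to a Dyck-path bijection via Remark~\ref{rem:paths}, you correctly identify the only two differences between $\Gamma_S$ and $\Gamma_{S'}$ (the red cross at $m$, and the notches flipping from upper-left to lower-right in the cells with rows $\ell+1,\dots,m$ and columns $m+1,\dots,n$), and your identity $\des_S(D)-\des_{S'}(D)=1+(\text{peaks}-\text{valleys of }D\text{ in }[m,n]\times[0,\ell])$ is a correct and rather clean repackaging of the bookkeeping that the paper carries out by cases. The case $\ell=0$ is handled exactly as in the paper ($\hg$ is the identity).

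The genuine gap is that the heart of the lemma --- the definition of $\hg$ when $\ell>0$ and the verification that $\des_{S'}(\hg(D))=\des_S(D)-1$ --- is never supplied. The ``intended move'' you describe (lifting the east steps of the excursion above the line $y=\ell$) is not, as stated, a well-defined invertible operation: expelling all $\E$ steps of the excursion from the lower rectangle destroys the information of where they sat, and you give no mechanism for the inverse to recover it; you also explicitly defer the boundary bookkeeping that you yourself identify as the delicate point. To close the gap, first observe that your discrepancy $p-v$ (peaks minus valleys of the portion $V$ of $D$ inside $R=[m,n]\times[0,\ell]$) can only equal $-1$, $0$ or $1$, and is determined by whether $V$ begins and ends with $\E$ or with $\N$. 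The paper's surgery is then a small, explicitly involutive rearrangement confined to $V$: if $V$ begins with $\E$ and ends with $\N$, move the initial maximal run of $\E$ steps of $V$ to its end; if $V$ begins with $\N$ and ends with $\E$, move the final maximal run of $\E$ steps to its front; otherwise leave $D$ unchanged. (Your toy example $\E\E\E\E\N\N\N\N\mapsto\E\E\E\N\N\E\N\N$ is in fact an instance of this with $V=\E\N\N$, so you likely have the right move in mind without having pinned it down.) Preservation of $\lpea$ is immediate since $R$ contains no diagonal cells, but the descent count requires a genuine argument: the paper decomposes the path at the $(\ell+1)$st north step and at the first and last north steps inside the rectangle $T$, and checks the contribution of each of the four factors separately, showing that the rearrangement of $V$ exactly compensates for the gained or lost double-step at $x=m$ and corner at the top of $V$, while the factor after the $m$th north step always loses exactly one descent. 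Without that construction and verification, the proof is incomplete.
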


\begin{proof}
Similarly to Remark~\ref{rem:paths}, constructing the desired bijection is equivalent to describing a bijection $\hg:\D_n\to\D_n$ such that 
\begin{equation}\label{eq:hgdes}
    \des_{S'}(\hg(D))=\des_{S}(D)-1
\end{equation}
and
\begin{equation}\label{eq:hglpea}
    \lpea(\hg(D))=\lpea(D)
\end{equation}
for all $D\in\D_n$.

Consider the rectangle $R=\{(x,y):\,m\le x\le n,\, 0\le y\le \ell\}$. For $D\in\D_n$, denote by $V$ the portion of $D$ inside $R$, which consists of those steps strictly after the $m$th $\E$ step and before the $(\ell+1)$st $\N$ step of $D$.
If $V$ is empty or it starts and ends with the same type of step ($\E$ or $\N$), let $\hg(D)=D$. 
If $V$ starts with an $\E$ and ends with an $\N$, move the maximal consecutive run of $\E$ steps at the beginning of $V$ to the end of $V$, while keeping the rest of the path the same.
If $V$ starts with an $\N$ and ends with an $\E$, move the maximal consecutive run of $\E$ steps at the end of $V$ to the beginning of $V$, while keeping the rest of the path the same.
Let $\hg(D)$ be the resulting path. See the examples in Figure~\ref{fig:hg}.

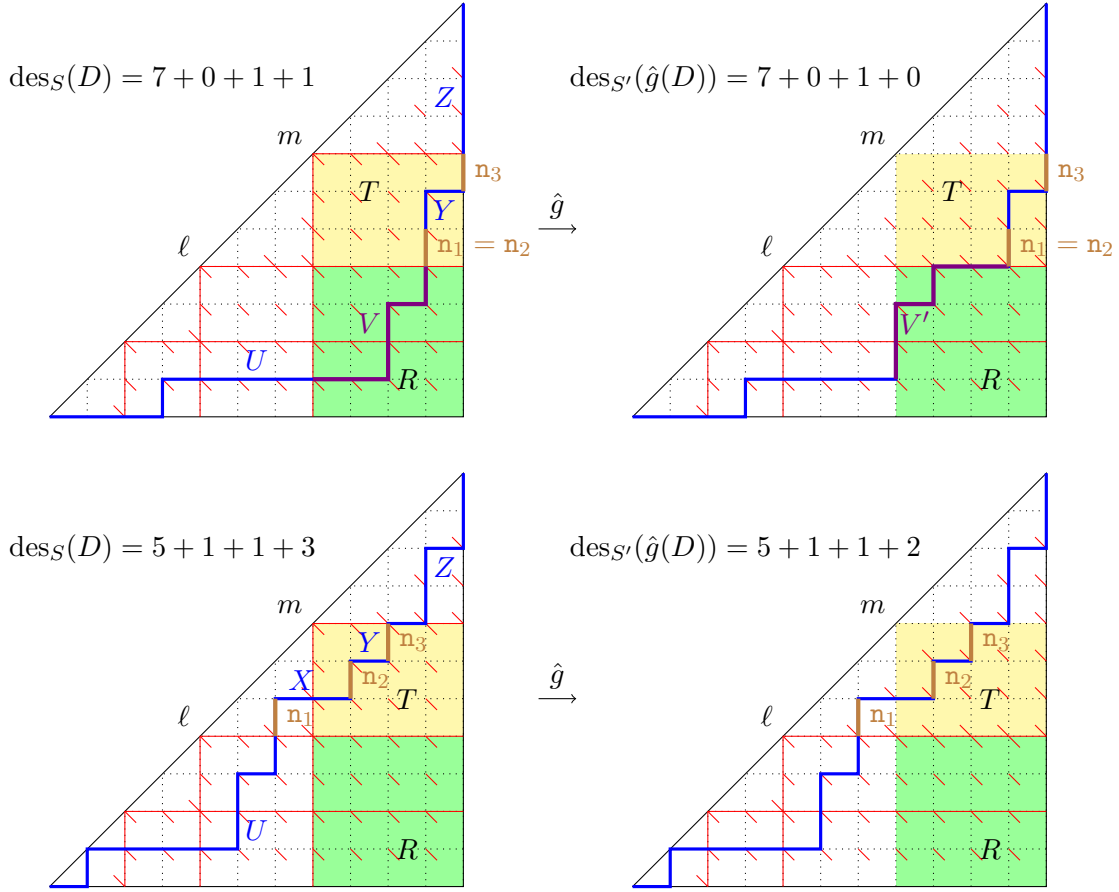
\begin{figure}[htb]
\centering
\begin{tikzpicture}[scale=.5]
      \fill[green!40] (7,0) rectangle  (11,4);
      \fill[yellow!40] (7,4) rectangle  (11,7);
      \draw (9.5,1) node {$R$};
      \draw (8.5,6) node {$T$};      
      \grid{11}
      \def\sig{{0,10,11,8,9,5,6,7,1,2,3,4}}
      \decorategrid{11}
      \draw (4,4) node[above left] {$\ell$};
      \draw (7,7) node[above left] {$m$};
      \draw[very thick,blue](0,0) \e\e\e\n\e\e\e\e \e\e\n\n\e\n \n\n\e\n\n\n\n\n;
      \draw[ultra thick,violet](7,1) \e\e\n\n\e\n;
      \draw[ultra thick,brown] (10,4) -- node[right] {$\N_1=\N_2$} ++(0,1);
      \draw[ultra thick,brown] (11,6) -- node[right] {$\N_3$} ++(0,1);
      \draw[blue] (5.5,1.5) node {$U$};
      \draw[violet] (8.5,2.5) node {$V$};
      \draw[blue] (10.5,5.5) node {$Y$};
      \draw[blue] (10.5,8.5) node {$Z$};
      \draw (3,9) node {$\des_{S}(D)=7+0+1+1$};
      \draw[->] (13,5)-- node[above] {$\hg$} (14,5);
\begin{scope}[shift={(15.5,0)}]
      \fill[green!40] (7,0) rectangle  (11,4);
      \fill[yellow!40] (7,4) rectangle  (11,7);
      \draw (9.5,1) node {$R$}; 
      \draw (8.5,6) node {$T$};      
      \grid{11}
      \def\sig{{0,10,11,8,9,1,2,3,4,5,6,7}}
      \decorategrid{11}
      \draw (4,4) node[above left] {$\ell$};
      \draw (7,7) node[above left] {$m$};
      \draw[very thick,blue](0,0) \e\e\e\n\e\e\e\e \n\n\e\n\e\e \n\n\e\n\n\n\n\n;
      \draw[ultra thick,violet](7,1) \n\n\e\n\e\e;
      \draw[ultra thick,brown] (10,4) -- node[right] {$\N_1=\N_2$} ++(0,1);
      \draw[ultra thick,brown] (11,6) -- node[right] {$\N_3$} ++(0,1);
      \draw[violet] (7.5,2.5) node {$V'$};
      \draw (3,9) node {$\des_{S'}(\hg(D))=7+0+1+0$};
\end{scope}

\begin{scope}[shift={(0,-12.5)}]
      \fill[green!40] (7,0) rectangle  (11,4);
      \fill[yellow!40] (7,4) rectangle  (11,7);
      \draw (9.5,1) node {$R$};  
      \draw (9.5,5) node {$T$};      
      \grid{11}
      \def\sig{{0,10,11,8,9,5,6,7,1,2,3,4}}
      \decorategrid{11}
      \draw (4,4) node[above left] {$\ell$};
      \draw (7,7) node[above left] {$m$};
      \draw[very thick,blue](0,0) \e\n\e\e\e\e\n\n\e\n\n\e\e\n\e\n\e\n\n\e\n\n;
      \draw[ultra thick,brown] (6,4) -- node[right] {$\N_1$} ++(0,1);
      \draw[ultra thick,brown] (8,5) -- node[right] {$\N_2$} ++(0,1);
      \draw[ultra thick,brown] (9,6) -- node[right] {$\N_3$} ++(0,1);
      \draw[blue] (5.5,1.5) node {$U$};
      \draw[blue] (6.7,5.5) node {$X$};
      \draw[blue] (8.5,6.5) node {$Y$};
      \draw[blue] (10.5,8.5) node {$Z$};
      \draw (3,9) node {$\des_{S}(D)=5+1+1+3$};
      \draw[->] (13,5)-- node[above] {$\hg$} (14,5);
\begin{scope}[shift={(15.5,0)}]
      \fill[green!40] (7,0) rectangle  (11,4);
      \fill[yellow!40] (7,4) rectangle  (11,7);
      \draw (9.5,1) node {$R$};  
      \draw (9.5,5) node {$T$};      
      \grid{11}
      \def\sig{{0,10,11,8,9,1,2,3,4,5,6,7}}
      \decorategrid{11}
      \draw (4,4) node[above left] {$\ell$};
      \draw (7,7) node[above left] {$m$};
      \draw[very thick,blue](0,0) \e\n\e\e\e\e\n\n\e\n\n\e\e\n\e\n\e\n\n\e\n\n;
      \draw[ultra thick,brown] (6,4) -- node[right] {$\N_1$} ++(0,1);
      \draw[ultra thick,brown] (8,5) -- node[right] {$\N_2$} ++(0,1);
      \draw[ultra thick,brown] (9,6) -- node[right] {$\N_3$} ++(0,1);
      \draw (3,9) node {$\des_{S'}(\hg(D))=5+1+1+2$};
\end{scope}
\end{scope}
\end{tikzpicture}
\caption{Two examples of the bijection $\hg$, with $n=11$ and $S=\{2,4,7\}$.}
\label{fig:hg}
\end{figure}

It is clear that $\hg$ is a bijection ---in fact, an involution--- from $\D_n$ to itself. Equation~\eqref{eq:hglpea} follows easily since the rectangle $R$ contains no diagonal cells, and hence no low peaks of the paths. It remains to prove Equation~\eqref{eq:hgdes}.

The grids $\Gamma_{S'}$ and $\Gamma_{S}$ differ in only two features: $\Gamma_{S}$ has a red vertical line at $x=m$ and a red horizontal line at $y=m$, and the cells inside the rectangle $T=\{(x,y):\,m\le x\le n,\, \ell\le y\le m\}$ have notches in their lower-right corners in the grid $\Gamma_{S'}$ but in their upper-left corners in the grid $\Gamma_{S}$. 
To compare $\des_{S}(D)$ with $\des_{S'}(\hg(D))$, we will break the paths $D$ and $\hg(D)$ into four factors and compare the contributions of each factor to the statistics $\des_S$ and $\des_{S'}$, respectively. Since the contributions are given by pairs of consecutive steps, namely double-steps and corners, the step at the end of each factor appears again at the beginning of the next factor.

Let $\N_1$ denote the $(\ell+1)$st $\N$ step of $D$, and let $\N_2$ and $\N_3$ denote the first and the last $\N$ steps inside the rectangle $T$, respectively. Note that $\N_3$ is the $m$th $\N$ step of $D$, and that some of these distinguished $\N$ steps may coincide.
These steps induce a decomposition $D=UV\N_1X\N_2Y\N_3Z$, with $V$ defined above, and with the understanding that if $\N_1$, $\N_2$ and $\N_3$ coincide, then $\N_1X\N_2Y\N_3$ is just a single $\N$ step, and similarly if only $\N_1$ and $\N_2$ or only $\N_1$ and $\N_2$ coincide.
We then have $\hg(D)=UV'\N_1X\N_2Y\N_3Z$, where $V'$ is obtained from $V$ by moving the initial or final run of $\E$ steps as described above.
These decompositions give
\begin{align*}\des_S(D)&=\des_S(UV\N_1)+\des_S(\N_1X\N_2)+\des_S(\N_2Y\N_3)+\des_S(\N_3Z),\\ \des_{S'}(\hg(D))&=\des_{S'}(UV'\N_1)+\des_{S'}(\N_1X\N_2)+\des_{S'}(\N_2Y\N_3)+\des_{S'}(\N_3Z).
\end{align*}
We will show that 
\begin{enumerate}[(i)]
\item $\des_S(UV\N_1)=\des_{S'}(UV'\N_1)$, 
\item $\des_S(\N_1X\N_2)=\des_{S'}(\N_1X\N_2)$,  
\item $\des_S(\N_2Y\N_3)=\des_{S'}(\N_2Y\N_3)$,
\item $\des_S(\N_3Z)=\des_{S'}(\N_3Z)+1$,
\end{enumerate}
from where Equation~\eqref{eq:hgdes} will follow.

To prove (i), note that the only two situations where $UV\N_1$ may contribute differently to $\des_S$ and to $\des_{S'}$ are the following. If $V$ starts with an $\E$, then this step and the preceding one (which must be an $\E$ as well) contribute a double-step $\E\E$ to $\des_S$, but not to $\des_{S'}$. If $V$ ends with an $\E$, 
then this step and $\N_1$ contribute a corner $\E\N$ to $\des_{S'}$, but not to $\des_{S}$.

It follows that, if $V$ is empty, or it starts and ends with $\N$, then neither of these situations occurs. 
If $V$ starts and ends with an $\E$, then both situations occur and the differences in contributions cancel each other out, so $\des_S(UV\N_1)=\des_{S'}(UV\N_1)$. In all these cases, $V'=V$ by definition, so (i) holds. 

If $V$ starts with an $\E$ and ends with an $\N$, then $\des_{S'}(UV\N_1)=\des_S(UV\N_1)-1$, since the double-step $\E\E$ crossing the line $x=m$ contributes to $\des_S$ but not to $\des_{S'}$. Let us show that changing $V$ to $V'$ compensates for this by increasing $\des_{S'}$ by~$1$. In this case, the path $V'$ starts with an $\N$ and ends with an $\E$, and it has the same number of double-steps $\N\N$ and one additional $\N\E$ corner than the path $V$. Also, the contribution to $\des_{S'}$ of the double-step $\N\N$ at the end of $UV\N_1$ matches the contribution of the new corner $\E\N$ at the end of $UV'\N_1$; see the top example in Figure~\ref{fig:hg}. All in all, due to the additional $\N\E$ corner, we get that $\des_{S'}(UV'\N_1)=\des_{S'}(UV\N_1)+1=\des_S(UV'\N_1)$.

Similarly, if $V$ starts with an $\N$ and ends with an $\E$, then $\des_{S'}(UV\N_1)=\des_S(UV\N_1)+1$, because of the contribution to $\des_{S'}$ of the corner $\E\N$ at the end of $UV\N_1$. In this case, the path $V'$ starts with an $\E$ and ends with an $\N$, and it has the same number of double-steps $\N\N$ and one fewer $\N\E$ corner than the path $V$. The contribution to $\des_{S'}$ of the corner $\E\N$ at the end of $UV\N_1$ matches the contribution of the new double-step $\N\N$ at the end of $UV\N_1$. All in all, we get that $\des_{S'}(UV'\N_1)=\des_{S'}(UV\N_1)-1=\des_S(UV\N_1)$, proving~(i).

Statement (ii) is clear if $\N_1$ and $\N_2$ coincide, in which case $\N_1X\N_2$ is just a single $\N$ step, or if $\N_2$ is on the left boundary of $T$, in which case $\N_1X$ lies to the left of $T$, where the grids $\Gamma_S$ and $\Gamma_{S'}$ agree. 
If neither of these hold, then $\N_1$ is outside of $T$, and $D$ has no steps on the left boundary of $T$; see the bottom example in Figure~\ref{fig:hg}. In this case, $D$ must cross the left boundary of $T$ via a pair of steps $\E\E$, which contributes a double-step to $\des_S$ but not to $\des_{S'}$, compensating for the contribution of the $\E\N$ corner at the end of $\N_1X\N_2$ to $\des_{S'}$ but not to $\des_S$.

Statement (iii) follows from the fact that $\N_2Y\N_3$ lies entirely inside $T$ and has the same number of $\N\E$ and $\E\N$ corners, which contribute to $\des_S$ and $\des_{S'}$, respectively.

To prove (iv), consider the two possibilities for the step following $\N_3$. If it is an $\N$, it creates a double-step $\N\N$ contributing to $\des_S$ but not to $\des_{S'}$; if it is an $\E$, it creates a corner $\N\E$ which again contributes to $\des_S$ but not to $\des_{S'}$. In the rest of the path $Z$, the grids $\Gamma_S$ and $\Gamma_{S'}$ agree. 
\end{proof}

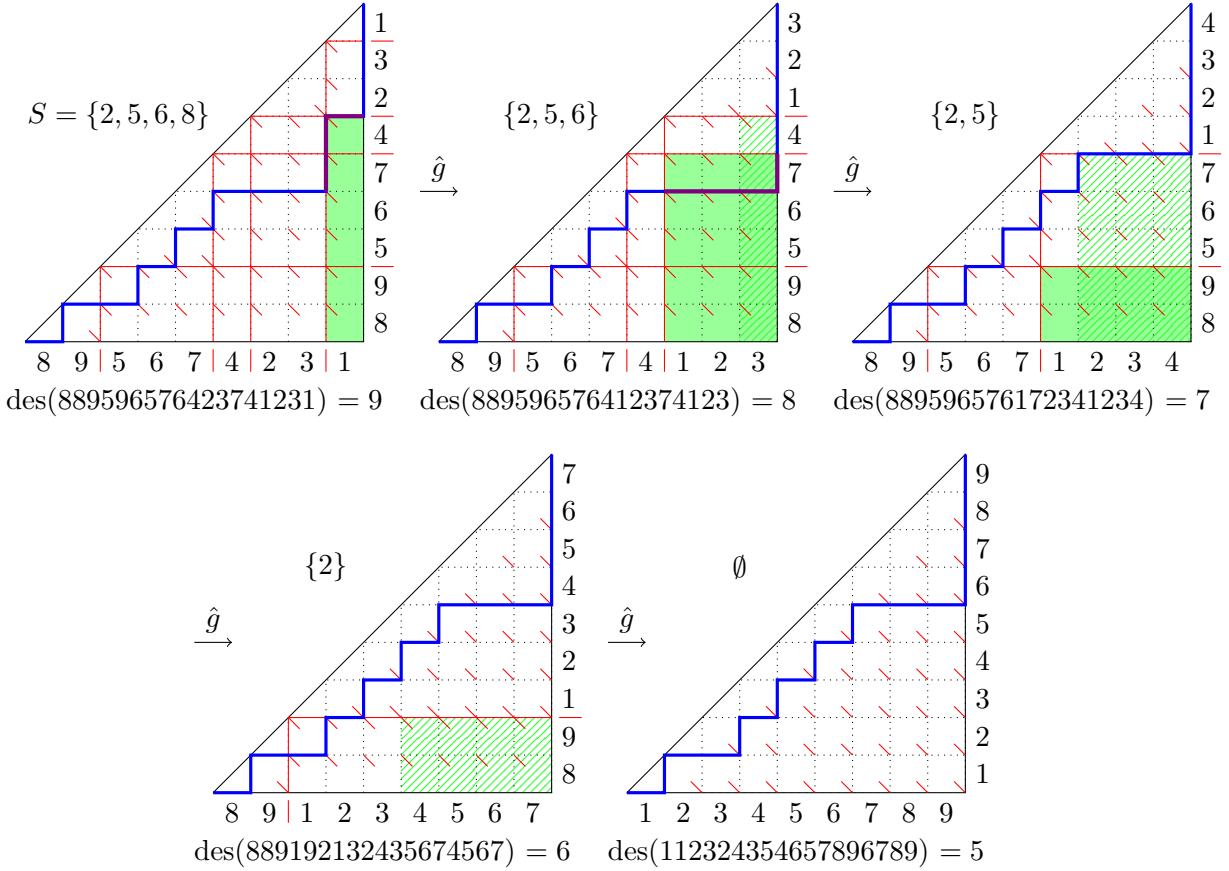
\begin{figure}[htb]
\centering
\begin{tikzpicture}[scale=.5]
      \fill[green!40] (8,0) rectangle  (9,6);
      \grid{9}
      \def\sig{{0,8,9,5,6,7,4,2,3,1}}
      \labelgridDes{9}
      \decorategrid{9}
      \draw[very thick,blue](0,0) \e\n\e\e\n\e\n\e\n\e\e\e \n\n\e \n\n\n;
      \draw[ultra thick,violet](8,4) \n\n\e;
      \draw[->] (10.5,4)-- node[above] {$\hg$} (11.5,4);
      \draw (4.5,-1.6) node {$\des(889596576423741231)=9$};
      \draw (2.5,6) node {$S=\{2,5,6,8\}$};
\begin{scope}[shift={(11,0)}]      
      \fill[green!40] (6,0) rectangle  (9,5);
      \fill[pattern color=green!80,pattern=north east lines] (8,0) rectangle  (9,6);
      \grid{9}
      \def\sig{{0,8,9,5,6,7,4,1,2,3}}
      \labelgridDes{9}
      \decorategrid{9}
      \draw[very thick,blue](0,0) \e\n\e\e\n\e\n\e\n\e \e\e\e\n \n\n\n\n;
      \draw[ultra thick,violet](6,4) \e\e\e\n;
      \draw[->] (10.5,4)-- node[above] {$\hg$} (11.5,4);
      \draw (4.5,-1.6) node {$\des(889596576412374123)=8$};
      \draw (3,6) node {$\{2,5,6\}$};
\end{scope}
\begin{scope}[shift={(22,0)}]  
    \fill[green!40] (5,0) rectangle  (9,2);
      \fill[pattern color=green!80,pattern=north east lines] (6,0) rectangle  (9,5);
      \grid{9}
      \def\sig{{0,8,9,5,6,7,1,2,3,4}}
      \labelgridDes{9}
      \decorategrid{9}
      \draw[very thick,blue](0,0) \e\n\e\e\n\e\n\e\n\e\n\e\e\e\n\n\n\n;
     \draw (4.5,-1.6) node {$\des(889596576172341234)=7$};
     \draw (3,6) node {$\{2,5\}$};
\end{scope}
\begin{scope}[shift={(5,-12)}]      
\begin{scope}[shift={(-11,0)}] \draw[->] (10.5,4)-- node[above] {$\hg$} (11.5,4); \end{scope}
      \fill[green!40] (2,0) rectangle  (9,0);
      \fill[pattern color=green!80,pattern=north east lines] (5,0) rectangle  (9,2);
      \grid{9}
      \def\sig{{0,8,9,1,2,3,4,5,6,7}}
      \labelgridDes{9}
      \decorategrid{9}
      \draw[very thick,blue](0,0)  \e\n\e\e\n\e\n\e\n\e\n\e\e\e\n\n\n\n;
      \draw[->] (10.5,4)-- node[above] {$\hg$} (11.5,4);
     \draw (4.5,-1.6) node {$\des(889192132435674567)=6$};
      \draw (3,6) node {$\{2\}$};
\begin{scope}[shift={(11,0)}]     
      \fill[pattern color=green!80,pattern=north east lines] (2,0) rectangle  (9,0);
      \grid{9}
      \def\sig{{0,1,2,3,4,5,6,7,8,9}}
      \labelgridDes{9}
      \decorategrid{9}
      \draw[very thick,blue](0,0)  \e\n\e\e\n\e\n\e\n\e\n\e\e\e\n\n\n\n;
       \draw (4.5,-1.6) node {$\des(112324354657896789)=5$};
      \draw (3,6) node {$\emptyset$};
\end{scope}
\end{scope}
\end{tikzpicture}
\caption{An example of the bijection $\hg_S$ with $S=\{2,5,6,8\}$, applied to the path on the right of Figure~\ref{fig:hf}. We obtain $g_S(889596576423741231)=112324354657896789$.}
\label{fig:hgS}
\end{figure}

When $S$ consists of a single element, the bijection $g$ in Lemma~\ref{lem:remove} is just the identity. Indeed, since $\ell=0$ in this case, the rectangle $R$ is the above proof contains no cells, and so $\hg$ does not change the path.

For a given $S\subseteq[n-1]$, denote by $$g_S:\C_n^S\to\C_n^\emptyset$$ the bijection obtained by repeatedly applying the map $g$ from Lemma~\ref{lem:remove} until all the elements of $S$ have been removed, which happens after $|S|$ iterations; see Figure~\ref{fig:hgS} for an example. It follows from the lemma that
$\des(g_S(\pi))=\des(\pi)-|S|$ and $\plat(g_S(\pi))=\plat(\pi)$ for all $\pi\in\C_n^S$. 

For $\sigma\in\S_n$ with $\Des(\sigma)=S$, consider the composition
$$\C_n^\sigma\stackrel{f_\si}{\longrightarrow}\C_n^S\stackrel{g_S}{\longrightarrow}\C_n^\emptyset=\C_n^{\id},$$
and denote by $\phi_\si=g_S\circ f_\si$ the resulting bijection from $\C_n^\sigma$ to $\C_n^{\id}$. The corresponding bijections on Dyck paths are denoted by $\hg_S$ (obtained by repeatedly applying $\hg$) and by $\hphi_\si=\hg_S\circ \hf_\si$.

\begin{proof}[Proof of Theorem \ref{thm:refined}]
Let $S=\Des(\si)$. The bijection $\phi_\sigma:\C_n^\sigma\to\C_n^{\id}$ has the property that, for all $\pi\in\C_n^\si$, 
\begin{equation}\label{eq:desphi}
    \des(\phi_\si(\pi))= \des(g_S(f_\si(\pi)))=\des(f_\si(\pi))-|S|=\des(\pi)-\des(\si)
\end{equation}
and
\begin{equation}\label{eq:platphi}
\plat(\phi_\si(\pi))=\plat(g_S(f_\si(\pi)))=\plat(f_\si(\pi))=\plat(\pi).
\end{equation}
It follows that
$$\poly_n^\si(t,u)=t^{\des(\si)}\poly_n^{\id}(t,u)=t^{\des(\si)}N_n(t,u),$$
by Equation~\eqref{eq:identity}.
\end{proof}

For example, composing the bijections in Figures~\ref{fig:hf} and~\ref{fig:hgS}, we see that if $\si=281375496$ and $\pi=228183175437954696\in\C_9^\si$, then $\phi_\si(\pi)=112324354657896789\in\C_9^\id$. Note that $\des(\phi_\si(\pi))=5=9-4=\des(\pi)-\des(\si)$ and $\plat(\phi_\si(\pi))=1=\plat(\pi)$.

\delete{
\begin{proof}[Proof of Theorem \ref{thm:main}]
Theorem~\ref{thm:refined} states that $\poly_n^\sigma(t,u)=t^{\des(\sigma)}N_n(t,u)$ for all $\sigma\in\S_n$.
Since $\C_n=\bigsqcup_{\sigma\in\S_n}\C^\sigma_n$, it follows that
$$\poly_n(t,u)=\sum_{\sigma\in\S_n} \poly^\sigma_n(t,u)=\sum_{\sigma\in\S_n}t^{\des(\sigma)}N_n(t,u)=A_n(t)N_n(t,u).\qedhere$$
\end{proof}
}

\section{Bijective proofs of symmetry}\label{sec:proofs_symmetry}

In this section we combine the bijection $\phi_\si$ from Section~\ref{sec:proofs} with the involution $\LK$ from Section~\ref{sec:Narayana} to give bijective proofs of Corollaries \ref{cor:symmetric_wdes}, \ref{cor:symmetric_des}, \ref{cor:symmetric_wdes_sigma} and \ref{cor:symmetric_des_sigma}.

One can easily interpret the Lalanne--Kreweras involution $\LK$ as an involution on permutations $\LKC:\C_n^\id\to\C_n^\id$, by identifying each $\pi\in\C_n^\id$ with its associated Dyck path $\dy(\pi)\in\D_n$. Specifically, given $\pi\in\C_n^\id$ with associated Dyck path $D=\dy(\pi)$, define $\LKC(\pi)\in\C_n^\id$ to be the permutation whose associated Dyck path is $\LK(D)$.

If $\pi=\pi(\id,D)\in\C_n^\id$, Equations~\eqref{eq:plat=lpea} and~\eqref{eq:des=hpea} imply that $$\wdes(\pi)=\des(\pi)+\plat(\pi)=\hpea(D)+\lpea(D)=\pea(D).$$
Thus, the behavior of $\LK$ on the number of peaks described in Equation~\eqref{eq:LKpea} translates to the fact that, for any $\pi\in\C_n^\id$,
\begin{equation}\label{eq:LKCwdes}
\wdes(\pi)=n+1-\wdes(\LKC(\pi)).
\end{equation}

\begin{proof}[Bijective proof of Corollary~\ref{cor:symmetric_wdes_sigma}]
Define an involution $\Phi_\si=\phi_\si^{-1}\circ\LKC\circ\phi_\si$ from $\C_n^\si$ to itself, as illustrated by the following diagram:

{\centering
\begin{tikzpicture}[scale=1.2]
     \draw(0,1) node {$\C_n^\si$};
     \draw[->] (.4,1)--node[above] {$\phi_\si$} (1.1,1);
     \draw(1.5,1) node {$\C_n^\id$};
     \draw[<->] (0,.7)--node[left] {$\Phi_\sigma$} (0,.3);
     \draw(0,0) node {$\C_n^\si$};
     \draw[->] (.4,0)--node[above] {$\phi_\si$} (1.1,0);
     \draw(1.5,0) node {$\C_n^\id$};
     \draw[<->] (1.5,.7)--node[right] {$\LKC$} (1.5,.3);
\end{tikzpicture}
\par}

Adding Equations~\eqref{eq:desphi} and~\eqref{eq:platphi}, we get
\begin{equation}\label{eq:wdesphi}
    \wdes(\phi_\si(\pi))=\wdes(\pi)-\des(\si)
\end{equation}
for every $\pi\in\C_n^\sigma$. Using this property twice and applying Equation~\eqref{eq:LKCwdes}, it follows that
\begin{equation}\label{eq:wdesPhi}
    \wdes(\pi)-\des(\si)=\wdes(\phi_\si(\pi))=n+1-\wdes(\LKC(\phi_\si(\pi)))=n+1-(\wdes(\Phi_\si(\pi))-\des(\si))
\end{equation}
for every $\pi\in\C_n^\sigma$. Thus, the map $\Phi_\si$ provides the desired bijection. See Figure~\ref{fig:Phi} for an example.
\end{proof}

\begin{figure}[htb]
\centering
\begin{tikzpicture}[scale=.48]
      \fill[brown!40] (3,0) rectangle  (9,1);
      \fill[brown!40] (7,1) rectangle  (8,2);
      \fill[brown!40] (5,2) rectangle  (9,4);
      \fill[brown!40] (7,4) rectangle  (8,6);
      \fill[brown!40] (8,5) rectangle  (9,7);
      \fill[brown!40] (0,3) rectangle  (1,9);
      \fill[brown!40] (1,7) rectangle  (2,8);
      \fill[brown!40] (2,5) rectangle  (4,9);
      \fill[brown!40] (4,7) rectangle  (6,8);
      \fill[brown!40] (5,8) rectangle  (7,9);
      \doublegrid{9}
      \def\sig{{0,2,8,1,3,7,5,4,9,6}}
      \labelgridDes{9}
      \draw[very thick,blue](0,0) \e\n\e\e\n\e\n\e\e\e\n\n\e\n\n\e\n\n;
      \gle{4}{.5}{$\to$}
      \gle{5}{.5}{$\gets$}
      \gle{6}{.5}{$\gets$}
      \gle{7}{.5}{$\to$}
      \gle{8}{.5}{$\gets$}
      \gle{6}{2.5}{$\gets$}
      \gle{7}{2.5}{$\to$}
      \gle{8}{2.5}{$\gets$}
      \gle{6}{3.5}{$\gets$}
      \gle{7}{3.5}{$\to$}
      \gle{8}{3.5}{$\gets$}
      \gle{8}{5.5}{$\gets$}
      \gle{7.5}{1}{$\downarrow$}
      \gle{7.5}{2}{$\uparrow$}
      \gle{7.5}{3}{$\downarrow$}
      \gle{7.5}{4}{$\downarrow$}
      \gle{7.5}{5}{$\uparrow$}
      \gle{5.5}{3}{$\downarrow$}
      \gle{6.5}{3}{$\downarrow$}
      \gle{7.5}{3}{$\downarrow$}
      \gle{8.5}{3}{$\downarrow$}
      \gle{8.5}{6}{$\uparrow$}
      \draw[very thick,magenta](0,0) \n\n\n\n\n\e\n\e\e\e\n\e\e\e\n\e\n\e;
      \gle{.5}{4}{$\uparrow$}
      \gle{.5}{5}{$\downarrow$}
      \gle{.5}{6}{$\downarrow$}
      \gle{.5}{7}{$\uparrow$}
      \gle{.5}{8}{$\downarrow$}
      \gle{2.5}{6}{$\downarrow$}
      \gle{2.5}{7}{$\uparrow$}
      \gle{2.5}{8}{$\downarrow$}
      \gle{3.5}{6}{$\downarrow$}
      \gle{3.5}{7}{$\uparrow$}
      \gle{3.5}{8}{$\downarrow$}
      \gle{5.5}{8}{$\downarrow$}
      \gle{1}{7.5}{$\gets$}
      \gle{2}{7.5}{$\to$}
      \gle{3}{7.5}{$\gets$}
      \gle{4}{7.5}{$\gets$}
      \gle{5}{7.5}{$\to$}
      \gle{3}{5.5}{$\gets$}
      \gle{3}{6.5}{$\gets$}
      \gle{3}{7.5}{$\gets$}
      \gle{3}{8.5}{$\gets$}
      \gle{6}{8.5}{$\to$}
      \Desdots{{2,5,6,8}}
      \draw[blue] (3.7,1.5) node {$D$};
      \draw[magenta] (4,4.5) node {$\hphi_\si^{-1}(\LK(\hphi_\si(D)))$};
      \draw[->] (10.25,4)-- node[above] {$\hf_\si$} (11.25,4);
\begin{scope}[shift={(11.5,0)}]      
      \fill[green!40] (8,0) rectangle  (9,6);
      \fill[green!40] (0,8) rectangle  (6,9);
      \fill[pattern color=brown!80,pattern=north east lines] (3,0) rectangle  (9,1);
      \fill[pattern color=brown!80,pattern=north east lines] (7,1) rectangle  (8,2);
      \fill[pattern color=brown!80,pattern=north east lines] (5,2) rectangle  (9,4);
      \fill[pattern color=brown!80,pattern=north east lines] (7,4) rectangle  (8,6);
      \fill[pattern color=brown!80,pattern=north east lines] (8,5) rectangle  (9,7);
      \fill[pattern color=brown!80,pattern=north east lines] (0,3) rectangle  (1,9);
      \fill[pattern color=brown!80,pattern=north east lines] (1,7) rectangle  (2,8);
      \fill[pattern color=brown!80,pattern=north east lines] (2,5) rectangle  (4,9);
      \fill[pattern color=brown!80,pattern=north east lines] (4,7) rectangle  (6,8);
      \fill[pattern color=brown!80,pattern=north east lines] (5,8) rectangle  (7,9);
      \doublegrid{9}
      \gle{4}{.5}{$\to$}
      \gle{5}{.5}{$\gets$}
      \gle{6}{.5}{$\gets$}
      \gle{7}{.5}{$\to$}
      \gle{8}{.5}{$\gets$}
      \gle{6}{2.5}{$\gets$}
      \gle{7}{2.5}{$\to$}
      \gle{8}{2.5}{$\gets$}
      \gle{6}{3.5}{$\gets$}
      \gle{7}{3.5}{$\to$}
      \gle{8}{3.5}{$\gets$}
      \gle{8}{5.5}{$\gets$}
      \gle{7.5}{1}{$\downarrow$}
      \gle{7.5}{2}{$\uparrow$}
      \gle{7.5}{3}{$\downarrow$}
      \gle{7.5}{4}{$\downarrow$}
      \gle{7.5}{5}{$\uparrow$}
      \gle{5.5}{3}{$\downarrow$}
      \gle{6.5}{3}{$\downarrow$}
      \gle{7.5}{3}{$\downarrow$}
      \gle{8.5}{3}{$\downarrow$}
      \gle{8.5}{6}{$\uparrow$}
       \gle{.5}{4}{$\uparrow$}
      \gle{.5}{5}{$\downarrow$}
      \gle{.5}{6}{$\downarrow$}
      \gle{.5}{7}{$\uparrow$}
      \gle{.5}{8}{$\downarrow$}
      \gle{2.5}{6}{$\downarrow$}
      \gle{2.5}{7}{$\uparrow$}
      \gle{2.5}{8}{$\downarrow$}
      \gle{3.5}{6}{$\downarrow$}
      \gle{3.5}{7}{$\uparrow$}
      \gle{3.5}{8}{$\downarrow$}
      \gle{5.5}{8}{$\downarrow$}
      \gle{1}{7.5}{$\gets$}
      \gle{2}{7.5}{$\to$}
      \gle{3}{7.5}{$\gets$}
      \gle{4}{7.5}{$\gets$}
      \gle{5}{7.5}{$\to$}
      \gle{3}{5.5}{$\gets$}
      \gle{3}{6.5}{$\gets$}
      \gle{3}{7.5}{$\gets$}
      \gle{3}{8.5}{$\gets$}
      \gle{6}{8.5}{$\to$}
      \def\sig{{0,8,9,5,6,7,4,2,3,1}}
      \labelgridDes{9}
      \draw[very thick,blue](0,0) \e\n\e\e\n\e\n\e\n\e\e\e\n\n\e\n\n\n;
      \draw[ultra thick,violet](8,4) \n\n\e;
     \draw[very thick,magenta](0,0) \n\n\n\n\n\n\e\e\n\e\e\e\e\e\n\e\n\e;
      \Desdots{{2,5,6,8}}
      \draw[->] (10.25,4)-- node[above] {$\hg$} (11.25,4);
\end{scope}
\begin{scope}[shift={(23,0)}]   
      \fill[green!40] (6,0) rectangle  (9,5);
      \fill[pattern color=green!80,pattern=north east lines] (8,0) rectangle  (9,6);
      \fill[green!40] (0,6) rectangle  (5,9);
      \fill[pattern color=green!80,pattern=north east lines] (0,8) rectangle  (6,9);
      \doublegrid{9}
      \def\sig{{0,8,9,5,6,7,4,1,2,3}}
      \labelgridDes{9}
      \draw[very thick,blue](0,0) \e\n\e\e\n\e\n\e\n\e \e\e\e\n \n\n\n\n;
      \draw[ultra thick,violet](6,4) \e\e\e\n;
      \draw[very thick,magenta](0,0) \n\n\n\n\n\n\e\e\n\e\e\e\e\e\n\e\n\e;
      \draw[ultra thick,violet](0,6) \e\e\n\e\e\e;
      \Desdots{{2,5,6}}
\end{scope}
\begin{scope}[shift={(1,-11.5)}]    
\begin{scope}[shift={(-11.5,0)}] \draw[->] (10.25,4)-- node[above] {$\hg$} (11.25,4); \end{scope}
      \fill[green!40] (5,0) rectangle  (9,2);
      \fill[pattern color=green!80,pattern=north east lines] (6,0) rectangle  (9,5);
      \fill[green!40] (0,5) rectangle  (2,9);
      \fill[pattern color=green!80,pattern=north east lines] (0,6) rectangle  (5,9);
      \doublegrid{9}
      \def\sig{{0,8,9,5,6,7,1,2,3,4}}
      \labelgridDes{9}
      \draw[very thick,blue](0,0) \e\n\e\e\n\e\n\e\n\e\n\e\e\e\n\n\n\n;
      \draw[very thick,magenta](0,0) \n\n\n\n\n\n\e\e\n\e\e\e\e\e\n\e\n\e;
      \draw[ultra thick,violet](0,5) \n\e\e\n;
      \Desdots{{2,5}}
      \draw[->] (10.25,4)-- node[above] {$\hg$} (11.25,4);
\begin{scope}[shift={(11.5,0)}]    
      \fill[green!40] (2,0) rectangle  (9,0);
      \fill[pattern color=green!80,pattern=north east lines] (5,0) rectangle  (9,2);
      \fill[green!40] (0,2) rectangle  (0,9);
      \fill[pattern color=green!80,pattern=north east lines] (0,5) rectangle  (2,9);
      \doublegrid{9}
      \def\sig{{0,8,9,1,2,3,4,5,6,7}}
      \labelgridDes{9}
      \draw[very thick,blue](0,0)  \e\n\e\e\n\e\n\e\n\e\n\e\e\e\n\n\n\n;
      \draw[very thick,magenta](0,0) \n\n\n\n\n\n\e\e\n\e\e\e\e\e\n\e\n\e;
      \Desdots{{2}}
      \draw[->] (10.25,4)-- node[above] {$\hg$} (11.25,4);
\end{scope}
\begin{scope}[shift={(23,0)}]
    \fill[pattern color=green!80,pattern=north east lines] (2,0) rectangle  (9,0);
    \fill[pattern color=green!80,pattern=north east lines] (0,2) rectangle  (0,9);
      \doublegrid{9}
      \def\sig{{0,1,2,3,4,5,6,7,8,9}}
      \labelgridDes{9}
      \draw[very thick,blue](0,0)  \e\n\e\e\n\e\n\e\n\e\n\e\e\e\n\n\n\n;
      \draw[very thick,magenta](0,0) \n\n\n\n\n\n\e\e\n\e\e\e\e\e\n\e\n\e;
       \foreach [count=\i] \x in {1,3,4,5,6,9} {
    	\draw[blue,->] (\x,-.5)--(\x,-.2); }
      \foreach [count=\i] \y in {0,1,2,3,4,5} {
    	\draw[blue,->] (9.5,\y)--(9.2,\y);}
    \foreach [count=\i] \x in {0,2,7,8} {
    	\draw[magenta,->] (\x,-.5)--(\x,-.2);}
    \foreach [count=\i] \y in {6,7,8,9} {
    	\draw[magenta,->] (9.5,\y)--(9.2,\y);}
      \draw[<->,thick] (5.5,6.5)--node[above right] {$\LK$} (6.5,5.5);
       \draw[blue] (5.4,2.4) node {$\hphi_\si(D)$};
       \draw[magenta] (4,7.6) node {$\LK(\hphi_\si(D))$};
\end{scope}
\end{scope}
\end{tikzpicture}
\caption{
An example of the bijection $\Phi_\si$ with $\si=281375496$, having $\Des(\si)=S=\{2,5,6,8\}$, applied to $\pi=228183175437954696\in\C_9^\si$. First the map $\phi_\si=g_S\circ f_\si$ is computed by applying $\hphi_\si=\hg_S\circ\hf_\si$ to the blue Dyck path $D=\dy(\pi)$ on the upper-left grid, as in Figures~\ref{fig:hf} and~\ref{fig:hgS}. Then $\LK$ is applied to the resulting path to obtain the magenta path in the bottom-right grid, which is drawn reflected above the diagonal for convenience. Finally, $\hphi_\si^{-1}=\hf_\si^{-1}\circ \hg_S^{-1}$ is applied to this path, to obtain the magenta path in the upper-left grid, giving $\Phi_\si(\pi)=281372581347549966$. Note that $\wdes(\pi)=10$ and $\wdes(\Phi_\si(\pi))=8$, satisfying Equation~\eqref{eq:wdesPhi}.
}
\label{fig:Phi}
\end{figure}

\begin{proof}[Bijective proof of Corollary~\ref{cor:symmetric_wdes}]
Recall that $\si^R$ denotes the reversal of $\si$, and consider now the composition $\Psi_\si=\phi_{\si^R}^{-1}\circ\LKC\circ\phi_\si$, which is a bijection from $\C_n^\si$ to $\C_n^{\si^R}$, as illustrated by the diagram:

{\centering
\begin{tikzpicture}[scale=1.2]
     \draw(0,1) node {$\C_n^\si$};
     \draw[->] (.4,1)--node[above] {$\phi_\si$} (1.1,1);
     \draw(1.5,1) node {$\C_n^\id$};
     \draw[->] (0,.7)--node[left] {$\Psi_\sigma$} (0,.3);
     \draw(0,0) node {$\C_n^{\si^R}$};
     \draw[->] (.4,0)--node[above] {$\phi_{\si^R}$} (1.1,0);
     \draw(1.5,0) node {$\C_n^\id$};
     \draw[<->] (1.5,.7)--node[right] {$\LKC$} (1.5,.3);
\end{tikzpicture}
\par}

By definition, the inverse of $\Psi_\si$ is simply $\Psi_{\si^R}$. Equation~\eqref{eq:wdesphi} for $\sigma$ and $\sigma^R$, together with Equation~\eqref{eq:LKCwdes}, imply that
$$
\wdes(\pi)-\des(\si)=\wdes(\phi_\si(\pi))=n+1-\wdes(\LKC(\phi_\si(\pi)))=n+1-(\wdes(\Psi_\si(\pi))-\des(\si^R)).
$$
Using Equation~\eqref{eq:desRsi}, it follows that
\begin{equation}\label{eq:wdesPsi}
\wdes(\pi)=2n-\wdes(\Psi_\si(\pi)).
\end{equation}
Thus, the involution $\Psi:\C_n\to\C_n$ defined by $\Psi(\pi)=\Psi_\si(\pi)$ whenever $\pi\in\C_n^\sigma$ provides the desired bijection.
For example, for $\pi$ as in Figure~\ref{fig:Phi}, one can compute that $\Psi(\pi)=694573691457318822$.
\end{proof}

\begin{proof}[Bijective proof of Corollary~\ref{cor:symmetric_des_sigma}] 
Denote by $R$ the reversal map that takes $\pi$ to $\pi^R$, and define the composition $\ol\Phi_\si=R\circ\Phi_{\si^R}\circ R$, which is an involution from $\C_n^\si$ to itself:

{\centering
\begin{tikzpicture}[scale=1.2]
     \draw(0,1) node {$\C_n^\si$};
     \draw[->] (.4,1)--node[above] {$R$} (1.1,1);
     \draw(1.5,1) node {$\C_n^{\si^R}$};
     \draw[<->] (0,.7)--node[left] {$\ol\Phi_\si$} (0,.3);
     \draw(0,0) node {$\C_n^\si$};
     \draw[->] (.4,0)--node[above] {$R$} (1.1,0);
     \draw(1.5,0) node {$\C_n^{\si^R}$};
     \draw[<->] (1.5,.7)--node[right] {$\Phi_{\si^R}$} (1.5,.3);
\end{tikzpicture}
\par}

Let $\pi\in\C_n^\sigma$. By Equations~\eqref{eq:desRwdes} and~\eqref{eq:desRsi},
\begin{equation}\label{eq:desR}
    \des(\pi)-\des(\si)=2n-1-\wdes(\pi^R)-(n-1-\des(\si^R))=n-(\wdes(\pi^R)-\des(\si^R)).
\end{equation}
On the other hand, Equation~\eqref{eq:wdesPhi} with $\pi^R$ and $\si^R$ playing the roles of $\pi$ and $\sigma$, respectively, states that
$$\wdes(\pi^R)-\des(\si^R)=n+1-(\wdes(\Phi_{\si^R}(\pi^R))-\des(\si^R)).$$
Applying Equation~\eqref{eq:desR} twice, it follows that
\begin{align*}
\des(\pi)-\des(\si)&=n-(\wdes(\pi^R)-\des(\si^R))=-1+\wdes(\Phi_{\si^R}(\pi^R))-\des(\si^R)\\
&=n-1-(\des(\ol\Phi_\si(\pi))-\des(\si)),
\end{align*}
and so $\ol\Phi_\si$ gives the desired bijection.
As an example, for $\pi$ and $\si$ as in Figure~\ref{fig:Phi}, one can compute that $\ol\Phi_\si(\pi)=282811337574956496$.
\end{proof}

\begin{proof}[Bijective proof of Corollary~\ref{cor:symmetric_des}]
Consider the composition $\ol\Psi_\si=R\circ\Psi_{\si^R}\circ R$,  which is a bijection from $\C_n^\si$ to $\C_n^{\si^R}$:

{\centering
\begin{tikzpicture}[scale=1.2]
     \draw(0,1) node {$\C_n^\si$};
     \draw[->] (.4,1)--node[above] {$R$} (1.1,1);
     \draw(1.5,1) node {$\C_n^{\si^R}$};
     \draw[->] (0,.7)--node[left] {$\ol\Psi_\si$} (0,.3);
     \draw(0,0) node {$\C_n^{\si^R}$};
     \draw[->] (.4,0)--node[above] {$R$} (1.1,0);
     \draw(1.5,0) node {$\C_n^{\si}$};
     \draw[->] (1.5,.7)--node[right] {$\Psi_{\si^R}$} (1.5,.3);
\end{tikzpicture}
\par}

The inverse of $\ol\Psi_\si$ is $\ol\Psi_{\si^R}$. Using Equation~\eqref{eq:wdesPsi} with $\pi^R$ and $\si^R$ playing the roles of $\pi$ and $\sigma$, respectively, together with Equation~\eqref{eq:desRwdes} applied twice, we get
$$\des(\pi)=2n-1-\wdes(\pi^R)=-1+\wdes(\Psi_{\si^R}(\pi^R))
=2n-2-\des(\ol\Psi_\si(\pi)).$$
Thus, the involution $\ol\Psi:\C_n\to\C_n$ defined by $\ol\Psi(\pi)=\ol\Psi_\si(\pi)$ whenever $\pi\in\C_n^\sigma$ provides the desired bijection.
As an example, for $\pi$ as in Figure~\ref{fig:Phi}, we have $\ol\Psi(\pi)=696944557371823182$.
\end{proof}

It might be possible to find simpler bijections
proving our results about symmetry of the distributions.

\begin{problem}
Give direct bijections proving Corollaries \ref{cor:symmetric_wdes}, \ref{cor:symmetric_des}, \ref{cor:symmetric_wdes_sigma} and \ref{cor:symmetric_des_sigma} that do not require passing through the case where $\sigma$ is the identity permutation.
\end{problem}

\section{Generalizations}\label{sec:generalization}

Generalizations of Stirling and quasi-Stirling permutations to the multiset containing $k$ copies of each number in $[n]$ have been studied in the literature. We denote this multiset by $\nnk$, so that $\bigsqcup^2[n]=\nn$. 

Generalized Stirling permutations, often called $k$-Stirling permutations, are permutations of $\nnk$ that avoid the pattern $212$. This generalization, originally proposed by Gessel and Stanley~\cite{gessel_stirling_1978}, has been studied in~\cite{park_r-multipermutations_1994,park_inverse_1994,janson_generalized_2011,kuba_analysis_2011}. 

Similarly, $k$-quasi-Stirling permutations were defined by the author~\cite{elizalde_descents_2021} as those permutations of $\nnk$ that avoid the patterns $1212$ and $2121$. Permutations of $\nnk$ can be viewed as ordered set partitions on $[kn]$ into $n$ blocks of size $k$, where block $b$ consists of those $i\in[kn]$ such that $\pi_i=b$, for each $b\in[n]$. With this interpretation, a permutation avoids $1212$ and $2121$ if the underlying set partition is {\em noncrossing}, meaning that there are no $i<j<\ell<m$ so that $i,\ell$ are in one block and $j,m$ are in another block; see~\cite{kreweras_sur_1972,edelman_chain_1980} or \cite[Exer.\ 159]{stanley_catalan_2015}.
The distribution of the number of descents and plateaus in $k$-quasi-Stirling permutations is given in~\cite{elizalde_descents_2021}.

There are several ways to generalize nonnesting permutations of $\nn$, as given by Definition~\ref{def:nonnesting}, to permutations of $\nnk$. Next we describe three different generalizations, which arise from distinct ways to view elements of $\C_n$: as pattern-avoiding multipermutations, as labeled nonnesting matchings, and as multipermutations where the subsequences of first copies and of second copies of each entry coincide.

\subsection{Permutations avoiding $1221$ and $2112$}

One possible generalization is to consider permutations $\pi$ of $\nnk$ that avoid the patterns $1221$ and $2112$, that is, there do not exist $i<j<\ell<m$ such that $\pi_i=\pi_m\neq\pi_j=\pi_\ell$. Let $\A_n^{k}$ denote this set of permutations. For example, $\A_2^{3}=\{111222,112122,221211,222111\}$. 
When $k\ge3$, this definition is quite restrictive. The distribution of descents and plateaus is relatively simple in this case, as the next result shows.

\begin{theorem}\label{thm:Ank}
For $k\ge3$ and $n\ge1$, we have
$$\sum_{\pi\in\A_n^{k}} t^{\des(\pi)}u^{\plat(\pi)}=u^{(k-3)n+2}(u^2+t)^{n-1}A_n(t).$$
\end{theorem}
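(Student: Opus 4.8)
The goal is to count permutations of $\nnk$ avoiding $1221$ and $2112$, tracking descents and plateaus. My first step is to understand the structure of $\A_n^k$ for $k\ge3$ by analyzing what the avoidance conditions force on the arrangement of the copies. As in the $k=2$ case, I will consider the matching-like structure: for each value $b\in[n]$, its $k$ copies occupy some set of positions. The patterns $1221$ and $2112$ involve exactly two distinct values each appearing twice, so avoiding them restricts how the occurrence-intervals of any two values can interleave. I expect to show that avoiding these patterns forces each pair of values to be either nested or disjoint in a very rigid way, and that the strong restriction for $k\ge3$ (noted in the text via the example $\A_2^3$) collapses the structure: essentially each value's copies must be ``grouped'' into at most a couple of runs, with very limited interleaving between distinct values.

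\textbf{Key steps.} First I would prove a structural lemma describing exactly which position-patterns are allowed: for $k\ge 3$, if value $b$ appears in positions forming a set, then between any two copies of $b$ one cannot have a configuration creating a forbidden pattern with another value. I anticipate the conclusion that each $\pi\in\A_n^k$ decomposes so that the subsequence of first copies and last copies behave like the $k=2$ nonnesting case, while the $k-2$ ``middle'' copies of each value are forced into consecutive (plateau-producing) positions. This should explain the factor $u^{(k-3)n+2}$: most of the interior copies contribute forced plateaus, yielding $(k-3)n$ guaranteed plateaus plus a constant adjustment of $+2$ from boundary effects, and the remaining freedom reduces to a $k=2$-type count. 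Concretely, I would set up a bijection between $\A_n^k$ and a simpler combinatorial family (a decorated version of $\S_n$ together with a binary choice at each value, mirroring the $(u^2+t)^{n-1}$ factor), carefully tracking how each structural feature contributes to $\des$ and $\plat$.

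\textbf{Translating to the generating function.} Once the structure is pinned down, I would show that removing the forced middle plateaus and one boundary contribution reduces the statistic-tracking to a product over the $n$ values. The factor $A_n(t)$ should emerge from the freedom in ordering the values (governed by descents of an underlying permutation $\s(\pi)\in\S_n$, exactly as in the $k=2$ analysis of Theorem~\ref{thm:refined}), and the factor $(u^2+t)^{n-1}$ should arise from a local two-way choice at each of the $n-1$ ``junctions'' between consecutive value-blocks: each junction either contributes a plateau pair (weight $u^2$) or a descent (weight $t$). I would verify the base case $n=1$ directly (a single value with $k$ copies all equal gives $u^{k-1}$, matching $u^{(k-3)+2}=u^{k-1}$ with the empty product and $A_1(t)=1$), and confirm $\A_2^3$ against the formula as a sanity check: the formula gives $u^{1}(u^2+t)(1+t)=u^3+u t+u^3 t + u t^2$, which I would reconcile with direct enumeration of $\{111222,112122,221211,222111\}$.

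\textbf{Main obstacle.} The hard part will be proving the structural rigidity lemma cleanly: I must show that for $k\ge3$ the avoidance of $1221$ and $2112$ genuinely forces the $k-2$ interior copies of each value to be consecutive and to sit adjacent to one of the outer copies, ruling out every alternative interleaving. The combinatorial case analysis of how three-or-more copies of one value can interleave with copies of another, while avoiding both forbidden patterns, is where the real work lies; once that rigidity is established, the statistic bookkeeping and the factorization of the generating function should follow in a routine manner.
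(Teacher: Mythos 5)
Your plan correctly guesses the final combinatorial picture --- an underlying permutation $\sigma\in\S_n$ whose blocks of $k$ equal entries appear in order, with an independent binary choice at each of the $n-1$ junctions contributing either $u^2$ (two plateaus) or $t$ (one descent, after destroying two plateaus) --- and your $n=1$ base case is fine. But the proposal is a plan, not a proof: the entire mathematical content of this theorem is the structural rigidity lemma, which you explicitly defer (``where the real work lies''). The paper's version of that lemma is: if $\pi_i\pi_j\pi_\ell\pi_m=abab$ is a crossing ($i<j<\ell<m$, $a\neq b$), then $\pi_\ell$ is the \emph{last} $a$, $\pi_j$ is the \emph{first} $b$, and $\ell=j+1$. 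The proof that $\ell=j+1$ is exactly where $k\ge3$ enters: any entry $c\notin\{a,b\}$ strictly between positions $j$ and $\ell$ would need its remaining $k-1\ge2$ copies placed somewhere, and a short case analysis shows every placement (inside $[i,m]$, split across both sides, or all on one side) creates a forbidden $1221$ or $2112$. Without this argument you cannot conclude that there are only $2^{n-1}n!$ permutations in $\A_n^k$, which is the crux.

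Moreover, one of your intermediate anticipations would actively mislead you: you predict that ``the subsequence of first copies and last copies behave like the $k=2$ nonnesting case'' and that ``the remaining freedom reduces to a $k=2$-type count.'' That is false --- the $k=2$ count carries a Narayana factor $N_n(t,u)$ (Catalan-many underlying shapes), whereas for $k\ge3$ the rigidity lemma collapses the shapes to just $2^{n-1}$, which is why the formula has $(u^2+t)^{n-1}$ and no Narayana polynomial. Finally, your sanity check contains an arithmetic slip: for $n=2$, $k=3$ the exponent is $(k-3)n+2=2$, so the formula gives $u^2(u^2+t)(1+t)=u^4+u^4t+u^2t+u^2t^2$, which does match the direct enumeration of $\{111222,112122,221211,222111\}$; your stated value $u^1(u^2+t)(1+t)$ does not, so as written your verification step would appear to refute the (correct) theorem.
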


\begin{proof}
Suppose that $\pi\in\A_n^{k}$, and call $1221$ and $2112$ the {\em forbidden} patterns.
We claim that if there exist $i<j<\ell<m$ such that 
$\pi_i\pi_j\pi_\ell\pi_m=abab$ for some $a\neq b$ (this configuration is called a crossing), then $\pi_\ell$ must be the last $a$ in $\pi$, $\pi_j$ must be the first $b$ in $\pi$, and $\ell=j+1$.

Indeed, if $\pi_\ell$ was not the last $a$, then $\pi_r=a$ for some $r$ satisfying either $\ell<r<m$, in which case $\pi_j\pi_\ell\pi_r\pi_m=baab$, or
$m<r$, in which case $\pi_i\pi_j\pi_m\pi_r=abba$, both forbidden patterns. A symmetric argument shows that $\pi_j$ must be the first $b$. 

To show that $\ell=j+1$, suppose for contradiction that there exists an index $r$ with $j<r<\ell$. Having $\pi_r\in\{a,b\}$ would create a forbidden pattern, so we must have $\pi_r=c$ for some $c\notin\{a,b\}$. Consider the possible locations of the other copies of $c$ in $\pi$. If $\pi_s=c$ for some $i<s<m$, then the copies of $c$ in positions $r$ and $s$ would create a forbidden pattern with the copies of $a$ or $b$, so the only possible locations for copies of $c$ are outside of the interval $[i,m]$. If there are copies of $c$ both to the left and to the right of this interval, again they would create a forbidden pattern, so all the copies must be on one side. Since $k\ge3$, $\pi$ must contain at least two copies of $c$ in addition to $\pi_r$. Let us assume by symmetry that these copies are to the right of this interval, say $\pi_s=\pi_t=c$ for $m<s<t$, so that $\pi_i\pi_j\pi_r\pi_\ell\pi_m\pi_s\pi_t=abcabcc$. Now $\pi$ must contain a third copy of $b$, but any possible location for it would create a forbidden pattern. This proves the above claim.

Together with the fact that $\pi$ avoids $1221$ and $2112$, the claim implies that if the first and the last copy of an entry $a$ in $\pi$ are in positions $i$ and $j$, for some $i<j$, then all the entries in between these two must be equal to $a$, with two possible exceptions: $\pi_{i+1}$ may be the last copy of a different entry, and $\pi_{j+1}$ may be the first copy of a different entry.
Thus, for any two elements in $[n]$, all the copies of one of them (say $a$) must be to the left of all the copies of the other (say $b$), with the only possible exception that the last copy of $a$ may be to the left of the first copy of $b$. When this exception occurs, we will say $a$ {\em overruns} $b$.

It follows that, for any given $\pi\in\A_n^k$, there is some permutation $\si\in\S_n$ such that, for 
every $i\in[n-1]$, either all the copies of $\si_i$ in $\pi$ appear to the left of all the copies of $\si_{i+1}$, or $\si_i$ overruns $\si_{i+1}$. Additionally, each of the $n!$ choices for $\si$, combined with each of the $2^{n-1}$ possible choices of indices $i\in[n-1]$ such that $\si_i$ overruns $\si_{i+1}$, produce a (unique) permutation $\pi\in\A_n^k$. This shows that $|\A_n^k|=2^{n-1}n!$.

The $n!$ permutations in $\A_n^k$ with no overruns contribute $u^{(k-1)n}A_n(t)$ to the generating function. For each $i\in[n-1]$, swapping the last copy of $\si_i$ with the first copy of $\si_{i+1}$ to create an overrun decreases $\plat$ by $2$ and increases $\des$ by $1$, independently of the values $\sigma_i,\sigma_{i+1}$ and of the choices made for the other indices. 
It follows that 
$$\sum_{\pi\in\A_n^{k}} t^{\des(\pi)}u^{\plat(\pi)}=u^{(k-1)n}A_n(t)\left(1+\frac{t}{u^2}\right)^{n-1},$$
which is equivalent to the stated formula.
\end{proof}

Setting $u=1$ and $u=t$ in Theorem~\ref{thm:Ank}, respectively, we obtain the expressions
\begin{align*}
\sum_{\pi\in\A_n^{k}} t^{\des(\pi)}&=(1+t)^{n-1}A_n(t),\\
\sum_{\pi\in\A_n^{k}} t^{\wdes(\pi)}&=t^{(k-2)n+1}(1+t)^{n-1}A_n(t)
\end{align*}
for $k\ge3$. It follows that the distribution of the number of descents (respectively, weak descents) on $\A_n^{k}$ is symmetric. A bijective proof of these symmetries is obtained easily by reversing $\sigma$ and toggling the choices of the indices $i\in[n-1]$ where the overruns happen in the proof of Theorem~\ref{thm:Ank}.

\subsection{Nonnesting permutations}

A second way to generalize Definition~\ref{def:nonnesting} to permutations of $\nnk$ is to require that the underlying set partition is {\em nonnesting}; see \cite{athanasiadis_noncrossing_1998,athanasiadis_piles_1999,klazar_identities_2006,kasraoui_distribution_2006,chen_crossings_2007} and \cite[Exer.\ 164]{stanley_catalan_2015}.
In this case, a block $\{i_1,i_2,\dots,i_k\}_<$ of the partition is represented by $k-1$ arcs $(i_1,i_2),(i_2,i_3),\dots,(i_{k-1},i_k)$,
and the partition is nonnesting if this representation does not contain any pair of nested arcs, i.e., two arcs $(i,m)$ and $(j,\ell)$ such that $i<j<\ell<m$. In terms of the permutation $\pi$ of $\nnk$, an arc is placed between $i$ and $j$ if $\pi_i=\pi_j$ and there is no other copy of this value between positions $i$ and $j$ of $\pi$. This is equivalent to requiring that $\pi$ avoids the {\em barred} patterns $12\overline{1}21$ and $21\overline{2}12$; see~\cite{west_sorting_1993,bousquet-melou_forest-like_2007} for other appearances of barred patterns in the literature. 
Following Athanasiadis~\cite{athanasiadis_piles_1999}, who first considered such permutations in a geometric context , we call these {\em nonnesting permutations}.
Let $\B_n^{k}$ denote the set of nonnesting permutations of $\nnk$.
Clearly, $\A_n^{k}\subseteq \B_n^{k}$, but the converse is not true in general. For example, $\B_2^{3}=\A_n^{k}\sqcup\{121212,212121\}$.

A bijection between noncrossing and nonnesting partitions that preserves the sizes of the blocks was given by Athanasiadis~\cite{athanasiadis_noncrossing_1998}. It follows that the number of nonnesting permutations of $\nnk$ equals the number of those where the underlying partition is noncrossing, namely, $k$-quasi-Stirling permutations. This number is given in~\cite{elizalde_descents_2021}, and so we get $$|\B_n^k|=\frac{(kn)!}{((k-1)n+1)!}=n!\,\Cat_{n,k},$$
where $$\Cat_{n,k}=\frac{1}{(k-1)n+1}\binom{kn}{n}$$ is called a {\em $k$-Catalan number} \cite[pp.\ 168--173]{stanley_enumerative_1999}.
Given the simple formula in Theorem~\ref{thm:main} for the $k=2$ case and the results in~\cite{elizalde_descents_2021} on $k$-quasi-Stirling permutations, it is natural to ask the following.
\begin{problem} Find the distribution of the number of descents and the number of plateaus on $\B_n^{k}$ for $k\ge3$.
\end{problem}
It is interesting to point out that, unlike in the case $k=2$, the distribution of the number of descents on $\B_n^{k}$ fails to be symmetric for $k=3$ and $n=4$.

\subsection{Canon permutations}

A third possible generalization arises when thinking of $\C^\si_n$ as the set of permutations of $\nn$ 
where both the subsequence of first copies of each entry 
and the subsequence of second copies of each entry equal $\si$.
For $\si\in\S_n$ and $k\ge1$, define $\C^{k,\sigma}_n$ to be the set of permutations $\pi$ of $\nnk$ such that, for each $j\in[k]$, the subsequence of $\pi$ formed by the $j$th copy (from the left) of each entry is $\sigma$. 
Let $\C^{k}_n=\bigsqcup_{\sigma\in\S_n}\C^{k,\sigma}_n$. 
We call elements of $\C^{k}_n$ {\em $k$-canon permutations}.
By construction, $\A^{1}_n=\B^{1}_n=\C^{1}_n=\S_n$ and $\A^{2}_n=\B^{2}_n=\C^{2}_n=\C_n$, so the three definitions generalize the $k=1$ and $k=2$ cases.

In general, we have that $\B^{k}_n\subseteq\C^{k}_n$. To see this, suppose that $\pi$ is a permutation of $\nnk$ and that $\pi\notin\C^k_n$. Then there must exist some $j\in[k-1]$ so that the subsequences of $\pi$ formed by the $j$th copy and by the $(j+1)$st copy of each entry, respectively, are not equal. Thus, there exist $a,b\in[n]$ such that the $j$th copy of $a$ appears before the $j$th copy of $b$, but the $(j+1)$st copy of $a$ appears after the $(j+1)$st copy of $b$. But then, in the arc representation of the underlying partition, the arc connecting these two copies of $b$ is nested inside the arc connecting these two copies of $a$, and so $\pi\notin\B^k_n$.
 The reverse inclusion does not hold in general. For example, $\C_2^{3}=\B_2^{3}\sqcup\{112212,121122,212211,221121\}$.

In a subsequent paper, we will show that this definition provides the right setting to generalize Theorems~\ref{thm:main} and~\ref{thm:refined}.
We will show that, if we define
$$
\poly^k_n(t,u)=\sum_{\pi\in\C^k_n} t^{\des(\pi)}u^{\plat(\pi)} \quad \text{and} \quad 
\poly^{k,\si}_n(t,u)=\sum_{\pi\in\C^{k,\si}_n} t^{\des(\pi)}u^{\plat(\pi)}
$$
in analogy to Equations~\eqref{eq:polydef} and~\eqref{eq:polysidef}, then
for all $\sigma\in\S_n$ and all $k\ge1$ we have
\begin{equation}\label{eq:conj_refined}\poly_n^{k,\sigma}(t,u)=t^{\des(\sigma)}\poly_n^{k,\id}(t,u).\end{equation}
Thus, summing over all $\sigma\in\S_n$,
$$\poly_n^{k}(t,u)=A_n(t) \poly_n^{k,\id}(t,u).$$

The proof extends some of the ideas in our proofs of Theorems~\ref{thm:main} and~\ref{thm:refined} from Dyck paths to the more general setting of standard Young tableaux of rectangular shape. In particular, it yields certain generalizations of the notion of descents on such tableaux, as well as bijective proofs of some equidistribution results for these new statistics.

\subsection*{Acknowledgements}
The author thanks Kassie Archer for interesting discussions in the early stages of this work.

\bibliographystyle{plain}
\bibliography{nonnesting_multiperm-fixed}

\end{document}